\begin{document}

\title{\textbf{The genus zero, 3-component fibered links in $S^3$}}
\author{\textbf{Carson Rogers}}
\date{ }
\maketitle

\graphicspath{{./gen0_3comp_images/}}
\theoremstyle{plain}
\newtheorem{thm}{\textbf{Theorem}}[section]
\newtheorem{lem}{\textbf{Lemma}}[section]
\theoremstyle{definition}
\newtheorem{defn}{\textbf{Definition}}[section]
\newtheorem{propn}{\textbf{Proposition}}[section]
\newtheorem*{prodisk}{\textbf{Product disks}}
\newtheorem*{fib03}{\textbf{Fiber surfaces of type (0,3)}}
\newtheorem*{stallings}{\textbf{Stallings twists}}
\newtheorem*{heegfib}{\textbf{Heegaard diagrams from fibered links}}
\newtheorem*{whitewave}{\textbf{Whitehead graphs and waves}}

\begin{abstract}
The open book decompositions of the 3-sphere whose pages are pairs of pants have been fully understood for some time, through the lens of contact geometry. The purpose of this note is to exhibit a purely topological derivation of the classification of such open books, in terms of the links that form their bindings and the corresponding monodromies. We construct all of the links and their pair-of-pants fiber surfaces from the simplest example, a connected sum of two Hopf links, through performing (generalized) Stallings twists. Then, by applying the  now-classical theory of genus two Heegaard diagrams in $S^3$, we verify that the monodromies of the links in this family are the only ones corresponding to pair-of-pants open book decompositions of $S^3$.
\end{abstract}

\section{Introduction}

Say that an open book decomposition of a closed 3-manifold $M$ is of \textit{type $(g,b)$} if the pages have genus $g$ and the binding is a $b$-component link. It comes as no surprise that $S^3$ admits very few open book decompositions of types $(g,b)$ for some of the smallest values of $g$ and $b$. Namely, the annular open book decompositions with bindings given by the right and left-handed Hopf links are the only open book decompositions of $S^3$ of type $(0,2)$ \cite{BG16}, and the two trefoil knots and the figure-eight knot form the bindings of the only three open book decompositions of type $(1,1)$ \cite{Go70}. \\

The next simplest open book decompositions of $S^3$ are those of types $(0,3)$, whose pages are pairs of pants. This is an infinite family of open books, and in the proof of Lemma 5.5 of \cite{EO08}, Etnyre and Ozbagci quickly enumerate the monodromies of all examples. Their goal is to describe the corresponding contact structures, and as such, their argument relies on contact geometry. In particular, they rule out a large class of potential monodromies by considering Stein fillings of the corresponding contact 3-manifolds.\\

Before becoming aware of their work, and with different motivations, the author arrived at the same explicit classification of these open books several years ago, through purely 3-dimensional, topological methods. While experts on fibered links and Heegaard theory likely expect that this is possible to do, it seems that no such argument has appeared in writing outside of the author's PhD thesis. The purpose of releasing this separately is, ideally, to make the result better known to low-dimensional topologists who are not immersed in contact geometry.\\

\begin{figure}[t]
	\centering
	\includegraphics[scale=0.4]{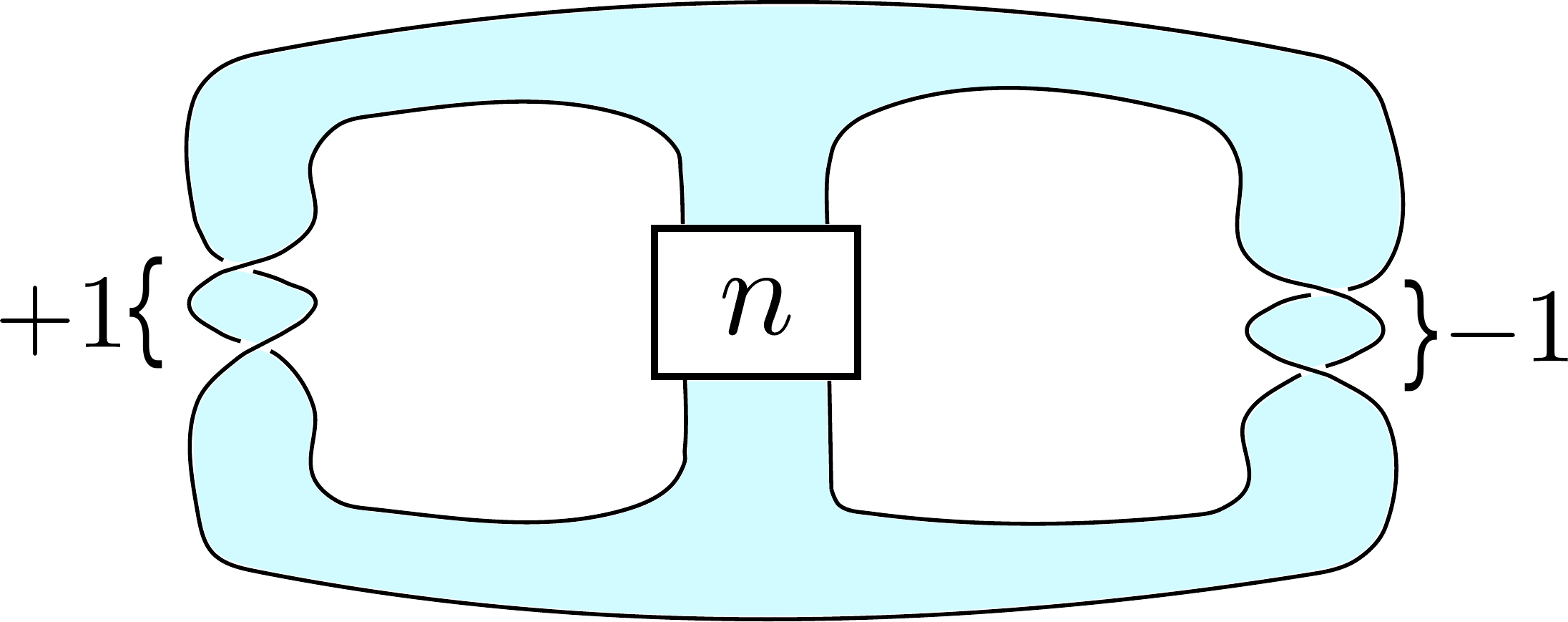}
	\caption{The links $L_n$, together with their genus zero fiber surfaces.}
	\label{gabai_surf}
\end{figure}

We couch the theorem and our proof in the language of fibered links. Say that a ($b$-component) fibered link in $S^3$ is of \textit{type $(g,b)$} if it forms the binding of an open book decomposition of that type. 

\begin{thm}\label{mainthm}
The fibered links in $S^3$ of type $(0,3)$ consist of connected sums of Hopf links, the links $L_n$ of Figure \ref{gabai_surf}, the link depicted in Figure \ref{gen_zero_link}, and its mirror image. The fiber surfaces of any two of these links are related by a sequence of Stallings twists.
\end{thm}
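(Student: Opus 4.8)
The plan is to attack this from two directions, matching the two-part structure of the statement. For the "construction" direction — showing every link in the list really is fibered of type $(0,3)$ — I would start from the connected sum of two Hopf links, whose pair-of-pants fiber surface is the obvious one (a disk with two bands), and show that (generalized) Stallings twists preserve fiberedness and the diffeomorphism type of the page. Since a Stallings twist is performed along an unknotted curve on the fiber with zero framing that bounds a disk meeting the fiber surface in a prescribed way, the result is again a fiber surface of the same genus and number of boundary components; this is standard, but I would need to set it up carefully for the "generalized" version used here. Iterating produces the family $L_n$ of Figure \ref{gabai_surf}, and a finite check produces the link of Figure \ref{gen_zero_link} and its mirror. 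The second sentence of the theorem — that any two of these fiber surfaces are connected by Stallings twists — would then essentially fall out of the construction, once I check that the sporadic example and its mirror are reachable from the $L_n$ family (or from the connected sum of Hopf links) by such moves.

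For the "completeness" direction — the genuinely hard part — the plan is to use the classical theory of genus two Heegaard diagrams of $S^3$. A type $(0,3)$ open book gives, by the standard construction (doubling the page, or taking the union of two pages across the binding), a genus two Heegaard splitting of $S^3$ together with a Heegaard diagram determined by the monodromy. One then invokes the theorem (Homma–Ochiai–Takahashi, or the wave/Whitehead-graph machinery indicated by the \texttt{whitewave} theorem environment promised in the paper) that every genus two Heegaard diagram of $S^3$ is "reducible" — it admits a wave, and repeated wave moves bring it to the trivial diagram. The combinatorics of a pair of pants means the monodromy is a product of Dehn twists about the three boundary-parallel curves and a bounded number of other curves; translating the existence of a wave into a constraint on this product should cut the possibilities down to a finite, explicitly enumerable list, which one then matches against the links already constructed.

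The main obstacle, as I see it, is precisely this translation step: extracting, from the abstract statement "the Heegaard diagram has a wave," concrete and manageable restrictions on the monodromy of a pair of pants. The mapping class group of a pair of pants (fixing the boundary) is free abelian of rank three, generated by the boundary Dehn twists, so the monodromy is recorded by a triple of integers; but the Heegaard diagram also sees how these twists interact, and one must rule out all but finitely many triples. I expect this requires a careful analysis of the Whitehead graph of the diagram — showing that for all but the listed monodromies the graph has no wave, contradicting reducibility in $S^3$ — together with a separate argument (perhaps a direct homological or linking-number computation) to pin down which sign/mirror conventions survive. A secondary technical point is handling the non-uniqueness of the Heegaard diagram associated to a given open book (stabilization, handle slides), so that "has a wave" is applied to a diagram in a controlled position; I would isolate this as a lemma before the main enumeration.

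Finally, I would close the loop by verifying that the finitely many monodromies surviving the Heegaard-diagram analysis are exactly those realized by the connected sums of Hopf links, the $L_n$, and the sporadic pair — completing both the classification and the statement that all their fiber surfaces are Stallings-twist equivalent.
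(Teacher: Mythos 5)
Your plan follows essentially the same route as the paper: construct every fiber surface from the boundary-connected sum of two Hopf annuli by (generalized) Stallings twists, record the monodromy as a triple in $\mathbb{Z}^3$, and prove completeness by combining a homological constraint on the associated genus two Heegaard diagram (the intersection matrix presents $H_1(S^3)$, forcing $|\det|=1$) with the Homma--Ochiai--Takahashi wave theorem applied to the Whitehead graphs. The only remark worth adding is that the wave theorem applies to \emph{every} genus two Heegaard diagram of $S^3$, so the normalization lemma you propose about stabilization and handle slides is unnecessary, and note that the sporadic link and its mirror genuinely require the generalized (framing $\pm 2$) Stallings twists rather than only the $0$-framed ones.
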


The second statement of this theorem is a byproduct of the most natural way of constructing the fiber surfaces of these links, so as to identify their monodromies. Here, we take the definition of a Stallings twist given by Harer \cite{Ha82}, in which the twisting curve on the fiber surface is allowed to have framing 0 or $\pm 2$. Following explicit descriptions of the monodromies of the links listed here, we prove that no other automorphism of a pair of pants can arise as the monodromy of such a fibered link in $S^3$ by analyzing the corresponding genus two Heegaard diagrams.\\

We note that the only redundancy to occur in our listing of these links is $L_0$, which is a connected sum of two Hopf links of opposite type. It is no accident that all of these links contain a Hopf sub-link: if $L$ is a 3-component link in a 3-manifold $M$ whose exterior is fibered by pairs of pants, then $L$ contains the exceptional fibers of a Seifert fibering of $M$ over $S^2$. \\

The paper is organized as follows. In Section 2, we give background and definitions. In Section 3.1, we provide constructions of the links described above which prove that they are fibered and of genus zero, when given appropriate orientations. In Section 3.2, we then use the theory of genus two Heegaard diagrams in $S^3$ to show that any fibered link in $S^3$ of type $(0,3)$ must have the same monodromy as one of these links.

\begin{figure}[b]
	\centering
	\includegraphics[scale=0.55]{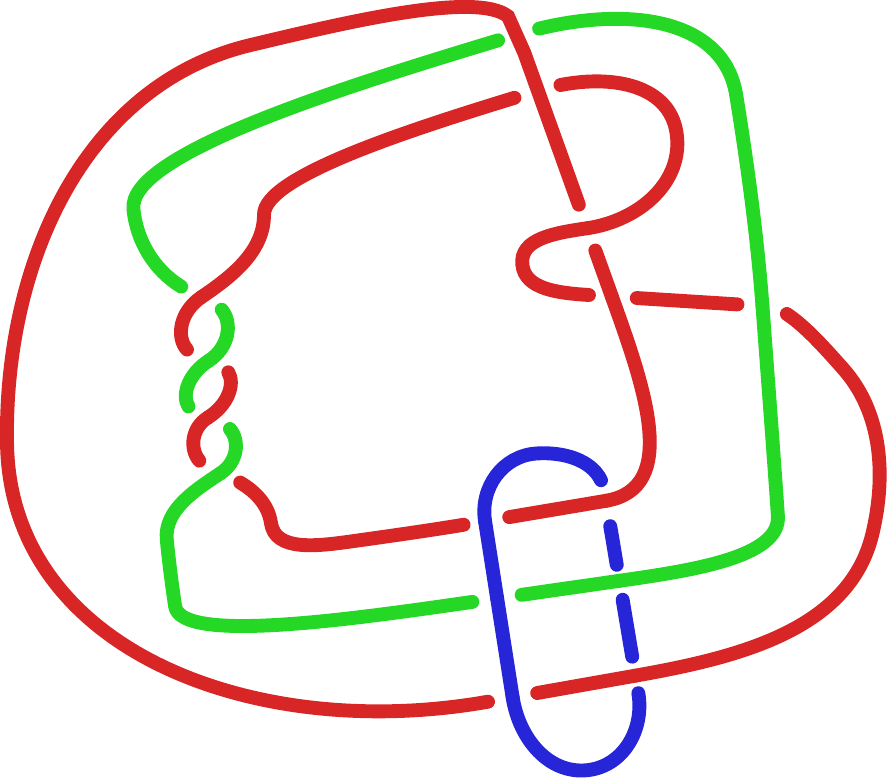}
	\caption{One of the two exceptional fibered links of type $(0,3)$, the other being its mirror image.}
	\label{gen_zero_link}
\end{figure}

\section{Preliminaries}

All manifolds under consideration are assumed to be orientable. Isotopies of embedded objects in a 3-manifold $M$ are always taken to be proper when $\partial M\neq\emptyset$. Two links or embedded surfaces in $M$ will be regarded as the same if they are (properly) isotopic. If $L$ is a link in $M$, we use $M_L$ to denote the compact exterior $\overline{M-\eta(L)}$ of $K$ in $M$, where $\eta(L)$ is a tubular neighborhood of $L$ in $M$. For basic terminology and facts from 3-manifold topology that are taken for granted throughout, we refer the reader to \cite{Sc14}.

\subsection{Fibered links}

Let $L$ be an oriented null-homologous link in a closed, oriented 3-manifold $M$. A \textit{Seifert surface} for $L$ is an embedded compact, oriented surface $F$ in $M$ such that $\partial F=L$. We can equally well regard a Seifert surface as being properly embedded in $M_L$, and the two viewpoints will be used interchangeably. We say that $L$ is \textit{fibered} if $E(L)$ fibers over $S^1$, so that the fiber $F$ is a Seifert surface for $L$. We say that $F$ is a \textit{fiber surface} for $L$. When speaking of a fiber surface without reference to a specific link, we mean a fiber surface for some fibered link in $M$. A compact, orientable surface is said to be of \textit{type $(g,b)$} if it has genus $g$ and $b$ boundary components. A fibered link in $M$ is said to be of type $(g,b)$ if its fiber surface is of type $(g,b)$.\\

If $L$ is a fibered link with fiber surface $F$, then $M_L$ may be realized as a mapping torus $F\times[0,1]/\left((x,1)\sim(\phi(x),0)\right)$, where $\phi:F\rightarrow F$ is a homeomorphism which fixes $\partial F$ pointwise. We say that $\phi$ is a \textit{monodromy} for $F$. Note that a monodromy for $F$ is defined up to isotopy and conjugation by other self-homeomorphisms of $F$.\\

Conversely, given a surface with boundary $F$ and an orientation-preserving homeomorphism $\phi:F\rightarrow F$ which fixes $\partial F$ pointwise, there is a canonical way \cite{Et06} to fill in the boundary of the mapping torus of $\phi$ with solid tori $V_1,\ldots,V_n$ to obtain a closed 3-manifold $M_{\phi}$. The cores of $V_1,\ldots,V_n$ then constitute a well-defined link $L_{\phi}$ in $M_{\phi}$, which is denoted by $B_{\phi}$ in \cite{Et06}.

\begin{prodisk}
	Let $L$ be a fibered link in $M$ with fiber surface $F$, and let $\eta(F)\cong F\times I$ be a closed regular neighborhood of $F$ in $M_L$ such that $F\times\{t\}$ is a fiber of the fibration of $M_L$ for each $t\in I$. The surface exterior $M_F=\overline{M_L-N(F)}$ has a corresponding parametrization as $F\times I$. A \textit{product disk} is in $\eta(F)$ or $M_F$ is a properly embedded disk $D$ such that $\partial D$ intersects each component of $F\times\partial I$ in a single properly embedded arc. A product disk may be isotoped so that it intersects each fiber $F\times\{t\}$ in a single arc.\\
	
	A monodromy $\phi$ for $F$ defines a natural pairing between product disks in $\eta(F)$ and those in $M_F$. Viewing $M_L$ as the quotient of $F\times[0,1]$ by the action of $\phi$, we take $F\times[0,1/2]$ to be $\eta(F)$ and $F\times[1/2,1]$ to be $M_F$. Up to isotopy, every product disk in $\eta(F)$ is uniquely realized as $D(\alpha)=\alpha\times[0,1/2]$ for some properly embedded arc $\alpha$ in $F$. The same is true in $M_F$, with $[0,1/2]$ replaced by $[1/2,1]$. Since $(x,1)$ is identified with $(\phi(x),0)$ to form $M_L$, we denote $\alpha\times[1/2,1]$ by $D(\phi(\alpha))$. Once perturbed to lie in general position, $\partial D(\alpha)$ and $\partial D(\phi(\alpha))$ will typically intersect non-trivially. 
\end{prodisk}

\begin{fib03}
	The types of fiber surfaces to be considered here, with their monodromies, are naturally enumerated by multisets of three integers. Let $F$ be an oriented pair of pants, an abstract surface of type $(0,3)$, with boundary components labeled $b_1,b_2,b_3$. For each $i=1,2,3$, let $c_i$ be a simple closed curve embedded in $Int(F)$ which is parallel to $b_i$. Assume that $c_1,c_2,c_3$ have been chosen to be mutually disjoint. Throughout, we let $T_i$ denote a left Dehn twist along $c_i$, according to the convention of Figure \ref{dehn}. The inverse $T^{-1}_i$ is then a right twist along $c_i$. The composition of $T^{n_i}_i$ and $T^{n_j}_j$ will be denoted by $T^{n_i}_iT^{n_j}_j$. Note that, since the three curves are mutually disjoint, $T_i$ always commutes with $T_j$.\\
	
	\begin{figure}[t]
		\centering
		\includegraphics[scale=0.5]{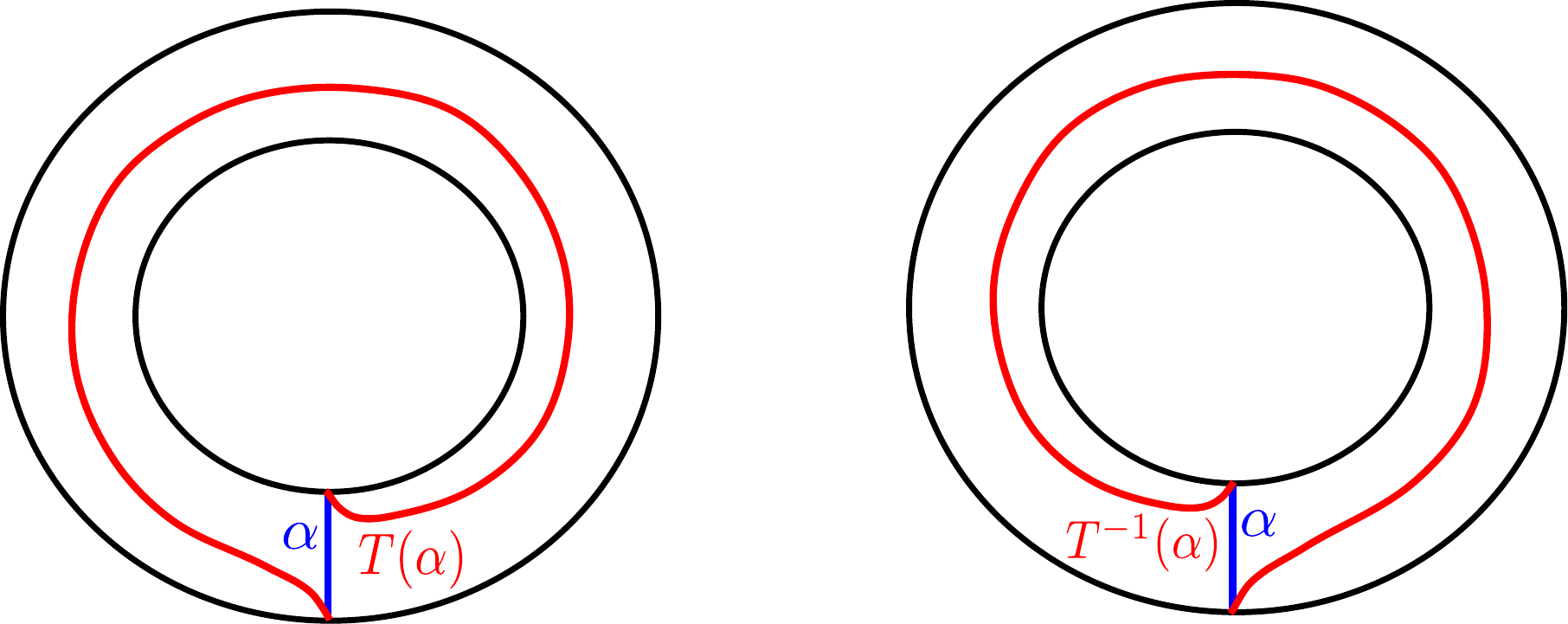}
		\caption{Standard conventions for right and left-handed Dehn twists (right and left, resp.)}
		\label{dehn}
	\end{figure}
	
	The mapping class group of $F$ is isomorphic to $\mathbb{Z}^3$ \cite{FM12}, with an explicit isomorphism given by sending the isotopy class of $T_i$ to the $i^{th}$ standard unit vector $e_i=(\delta_{1i},\delta_{2i},\delta_{3i})$. Every orientation-preserving homeomorphism $\phi:F\rightarrow F$ which fixes $\partial F$ pointwise is therefore isotopic to a unique homeomorphism of the form $T^{n_1}_1\circ T^{n_2}_2\circ T^{n_3}_3$. The manifold-link pair $(M_{\phi},L_{\phi})$ is then determined by the triple $n_1,n_2,n_3$. \\
	
	Further, if $\{n_1,n_2,n_3\}=\{n'_1,n'_2,n'_3\}$, then the mapping tori of $\phi=T^{n_1}_1T^{n_2}_2T^{n_3}_3$ and $\phi'=T^{n'_1}_1T^{n'_2}_2T^{n'_3}_3$ are exactly the same, up to relabeling the boundary components of $F$. The pairs $(M_{\phi},L_{\phi})$ and $(M_{\phi'},L_{\phi'})$ are therefore the same. From this point forward, we denote this pair by $(M[n_1,n_2,n_3],L[n_1,n_2,n_3])$, understanding that the order of $n_1$, $n_2$, and $n_3$ is unimportant. The associated fiber surface for $L[n_1,n_2,n_3]$ will likewise be denoted by $F[n_1,n_2,n_3]$.
\end{fib03}

\begin{stallings}
	Stallings twists are operations by which new fiber surfaces in $S^3$ can be produced from old ones. They were introduced by Stallings in \cite{St78}, and will be used to construct the fibered links appearing in the statement of the main theorem. The version described here is a slight generalization of Stallings' original definition which appears in \cite{Ha82}.\\
	
	Let $F$ be an oriented fiber surface in $S^3$ bounded by a link $L$, and let $c$ be an essential closed curve embedded in $F$ which is unknotted in $S^3$. Suppose that $lk(c,c^+)+\delta_1=\delta_2$, where $c^+$ is a copy of $c$ pushed off of $F$ to the positive side, $\delta_1=\pm 1$, and $\delta_2=\pm 1$. Let $A$ be a regular neighborhood of $c$ in $F$ and $N$ be a solid torus obtained by thickening $A$ to the positive side of $F$. We choose $N$ so that a neighborhood of $c^+$ in the corresponding fiber lies in $\partial N$, and the restriction of the fibration of $S^3-L$ to $N$ is a product fibration by annuli.\\
	
	\begin{figure}[b]
		\centering
		\includegraphics[scale=0.5]{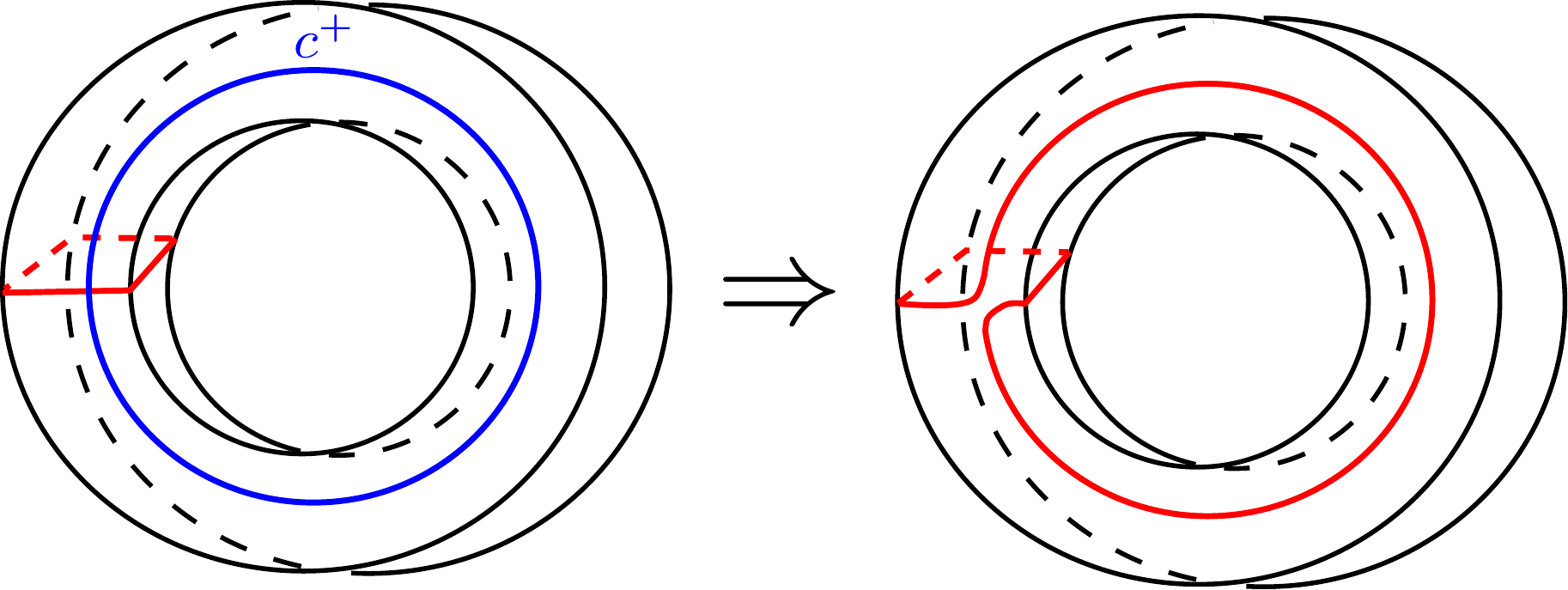}
		\caption{Re-gluing map for the solid torus $N$ which defines a Stallings twist.}
		\label{stal}
	\end{figure}
	
	To obtain a new fiber surface from $F$, perform Dehn surgery on $S^3$ along the core of $N$ by removing $N$ and regluing it by the homeomorphism of the boundary torus illustrated in Figure \ref{stal}. This map sends a meridian of $N$ which intersects $c^+$ once transversely to a Haken sum of itself with $c^+$. The choice of which way to resolve the point of intersection depends on $\delta_1$: we resolve it so that one `turns left' into the meridian after traversing $c^+$ if $\delta_1=-1$, and 'turns right' into the meridian if $\delta_1=+1$. \\
	
	It follows that the surgery coefficient is $\delta_2=\pm 1$, so the resulting 3-manifold is $S^3$. The key point is that this procedure turns $F$ into a new fiber surface $F'$, bounded by some link $L'$. Away from $N$, the fibration of the exterior of $L'$ will agree with that of the exterior of $L$. The restriction of the fibration to $N$ will again be a product fibration of $N$ by annuli, two being neighborhoods of $c$ and $c_+$ in $\partial N$. If $h$ is a monodromy for $F$, then $h\circ T^{\delta_1}_c$ is a monodromy for $F'$, where $T_c$ denotes a left-handed Dehn twist along $c$.
\end{stallings}

\begin{defn}
	In the above situation, we say that $F'$ is obtained from $F$ by performing a \textit{Stallings twist of type $\epsilon$} on $F$ along $c$, where $\epsilon=\frac{1}{2}lk(c,c^+)$. We also say that $c$ is a \textit{Stallings curve of type $\epsilon$} with respect to $F$. A Stallings twist will be called \textit{positive} (resp. \textit{negative}) if the surgery coefficient $\delta_2$ is $+1$ (resp. $-1$). 
\end{defn}

Note that the condition $lk(c,c^+)+\delta_1=\delta_2$ means that $\epsilon$ is either 0 or $\pm 1$. While a Stallings twist of type 0 can be either positive or negative, those of type 1 are necessarily negative, while those of type -1 must be positive. If $F'$ is obtained from $F$ by performing a Stallings twist of type $\epsilon$ along $c$, then $F$ is evidently obtained from $F'$ by performing a Stallings twist of type $-\epsilon$ along $c$, where $c$ is now viewed as a curve embedded in $F'$ in the natural sense. \\

A final observation is that a Stallings twist of type 0 may be iterated. If $c$ is a Stallings curve of type 0 in $F$ and $F'$ is obtained from $F$ by performing a Stallings twist along $c$, then $c$ is again a Stallings curve of type 0 in $F'$. It therefore makes sense to speak of performing a positive or negative Stallings twist along $c$ $n$ times in succession for any $n>0$.

\begin{defn}
	Suppose that $F$ is a fiber surface in $S^3$ and $c$ is a Stallings curve of type 0 in $F$. If $F'$ is the fiber surface obtained from $F$ by performing $n>0$ successive positive (resp. negative) Stallings twists along $c$, then we say that $F'$ is obtained from $F$ by performing a \textit{Stallings $n$-twist} (resp. \textit{$-n$-twist}) along $c$. 
\end{defn}

By induction and our previous observation on the effect of a Stallings twist on the monodromy, one sees that if $F'$ is obtained from $F$ by performing a Stallings $n$-twist along $c$ and $h$ is a monodromy for $F$, then $h\circ T^n_c$ is a monodromy for $F'$.

\subsection{Genus two Heegaard diagrams}

Throughout this subsection, $\Sigma$ will denote a closed, orientable surface of genus two. A \textit{curve} in $\Sigma$ will always refer to a simple closed curve embedded in $\Sigma$. Every curve under consideration is assumed to be \textit{essential}, meaning that it does not bound an embedded 2-disk in $\Sigma$. Whenever discussing a collection of two or more curves in $\Sigma$, we always assume that they have been isotoped so that every pair intersects transversely in a finite number of points. Upon orienting two curves $c_1$ and $c_2$ in $\Sigma$, we may consider the algebraic intersection number of $c_1$ with $c_2$, denoted $c_1\cdot c_2$, by counting the signed number of intersection points of $c_1$ with $c_2$.\\

A \textit{cut system} for $\Sigma$ is a pair $\alpha$ of disjoint curves $\alpha_1$ and $\alpha_2$ which cut $\Sigma$ into a planar surface, denoted by $\Sigma_{\alpha}$, with four boundary components. A (genus two) \textit{Heegaard diagram} $(\Sigma,\alpha,\beta)$ consists of two cut systems $\alpha$ and $\beta$ for $\Sigma$. We typically suppress $\Sigma$ and denote the Heegaard diagram by $(\alpha,\beta)$. Given a Heegaard diagram $(\alpha,\beta)$, we may construct a closed 3-manifold $N$ from $\Sigma\times[-1,1]$ by attaching one pair of 3-dimensional 2-handles along $\alpha\times\{-1\}$ and another along $\beta\times\{1\}$, and then capping off the resulting 2-sphere boundary components with 3-balls. We then say that $(\alpha,\beta)$ is a \textit{Heegaard diagram for $N$}. \\

Recall that a \textit{Heegaard surface} for a 3-manifold $M$ is a closed, orientable, separating surface $\Sigma$ embedded in $M$ such that the closure of each component of $M-\Sigma$ is a handlebody. In the above construction, the surface $\Sigma=\Sigma\times\{0\}$ is a genus two Heegaard surface for the 3-manifold $N$. Conversely, every closed 3-manifold admitting a genus two Heegaard surface arises from a Heegaard diagram via this construction.

\begin{heegfib}
	Let $L$ be a fibered link in a 3-manifold $M$ with fiber surface $F$. The boundary of a regular neighborhood of $F$ in $M$ divides $M$ into two copies of the handlebody $F\times I$, and is therefore a Heegaard surface for $M$. Given a monodromy $\phi$ for $F$, a corresponding Heegaard diagram can be constructed from product disks as follows. As before, let $\eta(F)\cong F\times [-1,1]$ be a closed regular neighborhood of $F$ in $M_L$, and let $\eta(F)=\overline{M_L-N(F)}$. Choose a collection of mutually disjoint arcs $\gamma_1,\ldots,\gamma_n$ properly embedded in $F$ which cut $F$ into a disk. Assume that $\phi(\gamma_i)$ has been isotoped to intersect $\gamma_i$ transversely in the fewest possible points for each $i$.\\ 
	
	Let $\gamma'_1,\ldots,\gamma'_n$ be nearby parallel copies of $\gamma_1,\ldots,\gamma_n$ in $F$ such that $\gamma'_i\cap\gamma_j=\emptyset$ for all $i$ and $j$. For each $i$, we assume that $\gamma'_i$ has been chosen to intersect $\phi(\gamma_j)$ in the fewest possible points for every $j$. In particular, if $\phi(\gamma_i)$ lies on the same side of $\gamma_i$ near each endpoint, we choose $\gamma'_i$ so that it lies on the other side of $\gamma_i$. \\
	
	For each $i=1,\ldots,n$, we define $\alpha_i$ to be the boundary of the product disk $D(\gamma'_i)\subset N(F)$ and $\beta_i$ to be the boundary of $D(\phi(\gamma_i))\subset E(F)$. Thus, with respect to the product structure on $N(F)$, we have 
	\begin{equation}
	\alpha_i=\left(\gamma'_i\times\{-1,1\}\right)\cup\left(\partial\gamma'_i\times [-1,1]\right),\mbox{ }\mbox{ }\mbox{ }
	\beta_i=\left(\gamma_i\times\{1\}\right)\cup\left(\partial\gamma_i\times[-1,1]\right)\cup\left(\phi(\gamma_i)\times\{-1\}\right).
	\nonumber
	\end{equation}
	An example construction of one pair $\alpha_i,\beta_i$ is shown in Figure \ref{heegfig2}. By construction, both $\alpha=\{\alpha_1,\ldots,\alpha_n\}$, and $\beta=\{\beta_1,\ldots,\beta_n\}$ are cut systems for $\Sigma=\partial N(F)$, so  $(\Sigma,\alpha,\beta)$ is a Heegaard diagram for $M$. We say that such a Heegaard diagram is \textit{associated with $\phi$}.
\end{heegfib}

Two fundamental facts about Heegaard diagrams will be required. The ensuing discussion follows parts of Section 2 of \cite{MZ14}. To state the first, let $(\alpha,\beta)$ be a Heegaard diagram for a 3-manifold $M$, and let $\alpha=\{\alpha_1,\alpha_2\}$, $\beta=\{\beta_1,\beta_2\}$. Once the four curves have been oriented in some fashion, we consider the \textit{intersection matrix}
\begin{equation}
M(\alpha,\beta)=\begin{pmatrix} \alpha_1\cdot\beta_1 & \alpha_1\cdot\beta_2 \\ \alpha_2\cdot\beta_1 & \alpha_2\cdot\beta_2 \end{pmatrix}.
\nonumber
\end{equation}
This as a presentation matrix for the abelian group $H_1(M)$. One can see this through the description of cellular homology in terms of the handle decomposition of $M$ corresponding to the Heegaard diagram (see Section 4.2 of \cite{GS99}).\\

Consequently, if $H_1(M)$ is finite, then $|\det M(\alpha,\beta)|$ is equal to the order of $H_1(M)$. In particular, we have:

\begin{lem}\label{detlem}
	\cite{MZ14} If $(\alpha,\beta)$ is a Heegaard diagram for $S^3$, then $|\det M(\alpha,\beta)|=1$.
\end{lem}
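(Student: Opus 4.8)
The plan is to obtain this as an immediate corollary of the general fact, recalled in the paragraph preceding the statement, that the intersection matrix $M(\alpha,\beta)$ is a presentation matrix for $H_1(M)$ whenever $(\alpha,\beta)$ is a genus two Heegaard diagram for a closed 3-manifold $M$. So the first step is simply to specialize that fact to $M = S^3$: the matrix $M(\alpha,\beta)$ presents the abelian group $H_1(S^3) = 0$.

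The second step is to translate ``presents the trivial group'' into a statement about the determinant. A $2 \times 2$ integer matrix $A$ presents the cokernel $\mathbb{Z}^2 / A\mathbb{Z}^2$; passing to Smith normal form, write $A = U D V$ with $U, V \in \mathrm{GL}_2(\mathbb{Z})$ and $D = \mathrm{diag}(d_1, d_2)$, $d_1 \mid d_2$, so that $\mathbb{Z}^2 / A\mathbb{Z}^2 \cong (\mathbb{Z}/d_1) \oplus (\mathbb{Z}/d_2)$. This cokernel vanishes exactly when $d_1 = d_2 = 1$, i.e. exactly when $|\det D| = 1$; and since $U$ and $V$ are unimodular, $|\det A| = |\det D|$. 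Taking $A = M(\alpha,\beta)$ then yields $|\det M(\alpha,\beta)| = 1$, completing the argument. (Equivalently, one can argue directly: the columns of $A$ generate $\mathbb{Z}^2$ iff $A \colon \mathbb{Z}^2 \to \mathbb{Z}^2$ is surjective iff it is an isomorphism, which forces $\det A = \pm 1$.)

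I do not expect any genuine difficulty: essentially all of the content is packaged into the cited identification of $M(\alpha,\beta)$ as a presentation matrix of $H_1(M)$ --- which itself rests on reading off cellular homology from the handle decomposition determined by the Heegaard diagram, as in Section 4.2 of \cite{GS99}. The only subtlety worth flagging is that $M(\alpha,\beta)$ depends on the orientations chosen for $\alpha_1, \alpha_2, \beta_1, \beta_2$; but reversing the orientation of one curve multiplies a single row or column of the matrix by $-1$, so $|\det M(\alpha,\beta)|$ is independent of these choices, and the conclusion is well posed.
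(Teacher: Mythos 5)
Your argument is correct and follows the same route the paper takes: it deduces the lemma from the fact, stated in the preceding paragraph, that $M(\alpha,\beta)$ is a presentation matrix for $H_1(M)$, so that for $S^3$ the (trivial) first homology forces $|\det M(\alpha,\beta)|=1$. Your Smith normal form elaboration and the remark on orientation choices merely spell out details the paper leaves implicit, so there is no substantive difference.
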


\begin{figure}[t]
	\centering
	\includegraphics[scale=0.4]{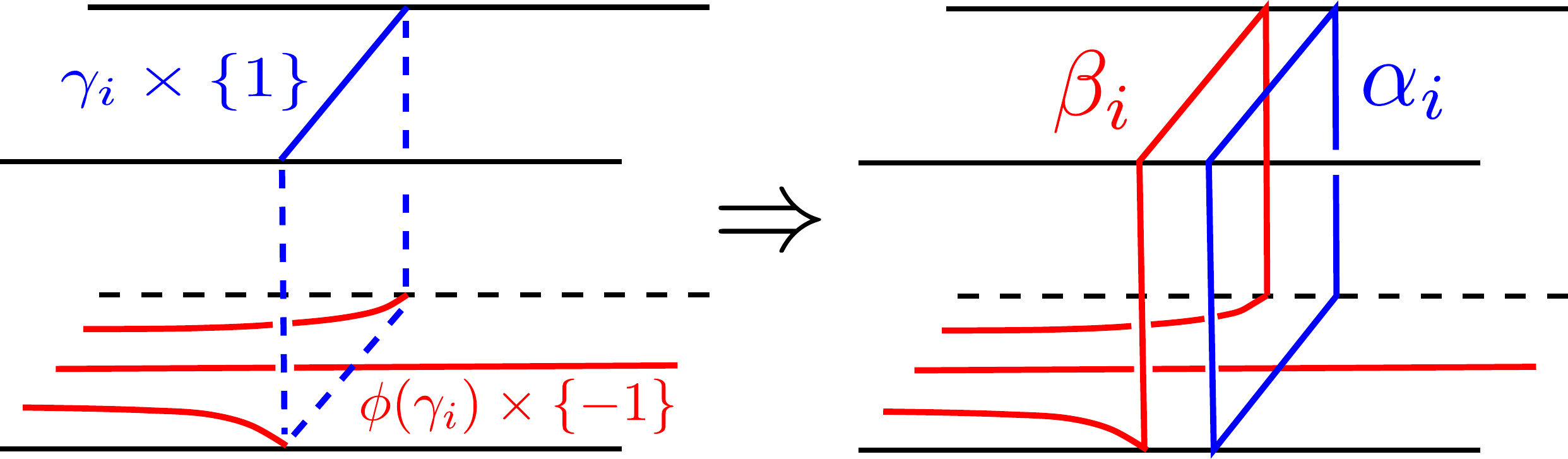}
	\caption{Constructing a pair of curves in a Heegaard diagram defined by the monodromy of a fiber surface.}
	\label{heegfig2}
\end{figure}

	We need some additional definitions. From this point forward, when considering two collections of curves in $\Sigma$, we assume that each curve in one collection has been isotoped to intersect each curve in the other in the fewest possible points. Thus, given a cut system $\alpha=\{\alpha_1,\alpha_2\}$ for $\Sigma$ and a collection $\mathcal{C}$ of mutually disjoint curves which are distinct from $\alpha_1$ and $\alpha_2$, we assume that the intersection of $\mathcal{C}$ with the cut surface $\Sigma_{\alpha}$ is a collection of properly embedded, essential arcs.

\begin{whitewave} 	
	Let $\alpha$ and $\mathcal{C}$ be as above. Viewing the boundary components of $\Sigma_{\alpha}$ as four fat vertices and the components of $\mathcal{C}\cap\Sigma_{\alpha}$ as edges, we may regard $\mathcal{C}\cap\Sigma_{\alpha}$ as a topological graph $\Sigma_{\alpha}(\mathcal{C})$, called the \textit{Whitehead graph} of $\mathcal{C}$ with respect to $\alpha$. Examples can be found in Figures \ref{whitehead_construct}, \ref{whitehead1}, and \ref{whitehead2} of the following section.\\
	
	Define a \textit{wave} in $\Sigma_{\alpha}$ to be an essential, properly embedded arc whose endpoints both lie in the same component of $\partial\Sigma_{\alpha}$. Given a Heegaard diagram $(\alpha,\beta)$, an \textit{$\alpha$-wave} for $(\alpha,\beta)$ is a wave $\omega$ properly embedded in $\Sigma_{\alpha}$ such that $\omega\cap\beta=\emptyset$. One similarly defines a \textit{$\beta$-wave} in $\Sigma_{\beta}$. If such an arc exists, we say that $(\alpha,\beta)$ contains a wave \textit{based in $\alpha$} (resp. $\beta$) and that $\Sigma_{\alpha}(\beta)$ (resp. $\Sigma_{\beta}(\alpha)$) \textit{contains a wave}.
\end{whitewave}

Our reason for considering waves in Heegaard diagrams is the following theorem of Homma, Ochiai, and Takahashi.

\begin{thm}\label{HOTthm}
	\cite{HOT80} Every genus two Heegaard diagram $(\alpha,\beta)$ for $S^3$ contains either an $\alpha$-wave or a $\beta$-wave. 
\end{thm}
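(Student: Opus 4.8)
The plan is to recast the statement as an instance of Whitehead's algorithm in the rank-two free group, which is the viewpoint of \cite{HOT80}. Let $H_\alpha$ be the handlebody bounded on the $\alpha$-side of the splitting, and identify $\pi_1(H_\alpha)$ with $F_2=\langle x_1,x_2\rangle$, taking $x_i$ to be a loop that meets the meridian disk bounded by $\alpha_i$ transversely once and misses the other meridian disk. Reading the signed intersections of $\beta_j$ with $\alpha_1\cup\alpha_2$ in cyclic order expresses $\beta_j$ as a word $w_j$ in $x_1^{\pm1},x_2^{\pm1}$; our standing minimal-position assumption says precisely that $w_1$ and $w_2$ are cyclically reduced, and $|w_j|$ is the number of points of $\beta_j\cap(\alpha_1\cup\alpha_2)$. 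With these conventions the Whitehead graph $\Sigma_\alpha(\beta)$ is exactly the Whitehead graph of the pair $\{w_1,w_2\}$ relative to the basis $\{x_1,x_2\}$, and an $\alpha$-wave is the geometric incarnation of a cut vertex of that graph yielding a length-nonincreasing Whitehead move.

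The argument would then proceed in three steps. First, I would observe that, because $(\alpha,\beta)$ presents $S^3$, the pair $\{w_1,w_2\}$ is a free basis of $F_2$: the $\beta$-handlebody is the handlebody complementary to $H_\alpha$, and since the genus-two Heegaard splitting of $S^3$ is standard, its meridian disks read off a generating pair of $\pi_1(H_\alpha)$ --- hence, for a rank-two free group, a basis. Second, I would invoke Whitehead's peak-reduction theorem: unless $\{w_1,w_2\}$ already has minimal total length in its automorphism orbit --- which, for a basis, forces $|w_1|+|w_2|=2$ --- some Whitehead automorphism of the second kind strictly decreases $|w_1|+|w_2|$, and by Whitehead's lemma it is read off from a cut vertex of $\Sigma_\alpha(\beta)$; the properly embedded arc in $\Sigma_\alpha$ cut out by that vertex has both endpoints on a single boundary circle, misses $\beta$, and is \emph{essential} precisely because the move strictly lowers complexity, so it is an $\alpha$-wave. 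Third, in the remaining case $|w_1|+|w_2|=2$ the diagram is, after relabeling, the standard genus-two diagram of $S^3$, where $\Sigma_\alpha$ is a four-holed sphere carrying one $\beta$-arc joining the two copies of $\alpha_1$ and one joining the two copies of $\alpha_2$; the essential arc separating one of these pairs from the other is disjoint from $\beta$, so it too is a wave, completing the proof.

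The main obstacle is the middle step: making the dictionary between Whitehead moves and waves precise, and in particular checking that a strictly length-decreasing move always produces an \emph{essential} arc missing the opposite curve system. Unwinding this comes down to a finite but delicate case analysis of the Whitehead graphs on four vertices that can carry a two-element subset of $F_2$, in which the $S^3$-constraints $\pi_1=1$ and $|\det M(\alpha,\beta)|=1$ of Lemma~\ref{detlem} are used to prune the possibilities; it is here that the two handlebodies must be treated on an equal footing, since a nonstandard diagram of $S^3$ may be reducible only through a move that alters the $\beta$-curves rather than the $\alpha$-curves --- which is why the conclusion reads ``$\alpha$-wave or $\beta$-wave'' rather than simply ``$\alpha$-wave.'' The other point demanding care is the input that $\{w_1,w_2\}$ is a basis: one can either deduce it from the standardness of the genus-two Heegaard splitting of $S^3$, or avoid it by reading the wave off directly from the combinatorial analysis, which is in essence the route taken in \cite{HOT80}.
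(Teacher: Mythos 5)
This theorem is not proved in the paper at all: it is quoted as a black box from \cite{HOT80}, so there is no internal proof to compare against, and your attempt has to stand on its own. As written it does not. The decisive gap is your middle step, the passage from a length-reducing Whitehead automorphism (equivalently, a cut vertex of the \emph{abstract} Whitehead graph of the pair of cyclic words $w_1,w_2$) to an \emph{embedded} essential arc in $\Sigma_\alpha$ disjoint from $\beta$. A wave is a statement about the planar arc system $\beta\cap\Sigma_\alpha$ in the four-holed sphere, not about the underlying abstract graph, and a cut vertex does not automatically yield such an arc: the edges at the cut vertex may be interleaved around the fat vertex so that no embedded arc separates them. The litmus test that something essential is missing is that nothing in your steps 1--2 uses genus two: for a genus-$g$ diagram of $S^3$ the same inputs all hold (Waldhausen standardness, the $\beta$-words forming a basis of $F_g$, peak reduction producing a length-reducing Whitehead move, a cut vertex in the Whitehead graph), yet the conclusion is false --- the paper itself cites Ochiai \cite{Och79} for genus-three (and higher) Heegaard diagrams of $S^3$ with no waves. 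So no argument of the shape you propose can close without genuinely genus-two-specific, planar/topological input; that input is exactly the ``finite but delicate case analysis'' you defer to at the end, which is the entire content of the Homma--Ochiai--Takahashi proof rather than a detail to be checked.

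Two smaller points. First, your claim that $\{w_1,w_2\}$ is a basis of $\pi_1(H_\alpha)$ needs more than ``the splitting is standard'': van Kampen only gives that $w_1,w_2$ \emph{normally} generate, and normal generation does not imply generation in a free group. The fix is as you hint --- the standard dual meridian system reads off a basis, any two complete disk systems of $H_\beta$ are related by disk slides, and slides change the read-off pair by Nielsen transformations up to conjugation --- but this, together with the bookkeeping that the $w_i$ are only defined as cyclic words, should be said explicitly. Second, your terminal case $|w_1|+|w_2|=2$ is handled correctly (the standard diagram does carry a wave under the paper's definition), but note that the hard direction is not this case; it is the geometric realization step above, and until that is supplied the proposal is a reduction of the theorem to its hardest part rather than a proof.
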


The second half of our main proof to follow amounts to a direct application of this result, together with Lemma \ref{detlem}. It is a testament to the special nature of genus two Heegaard diagrams, as higher-genus Heegaard diagrams for $S^3$ need not contain any waves \cite{Och79}.

\section{Proof of Theorem \ref{mainthm}}

Our proof breaks into two main parts. In Section 3.1, we prove that the links listed in the statement of Theorem \ref{mainthm} are indeed fibered links of type $(0,3)$. We further show that the fiber surfaces of any two of these links are related by a sequence of Stallings twists, and describe their monodromies explicitly. In Section 3.2, we use the elements of Heegaard theory discussed in Section 2.2 to show that a fibered link of type $(0,3)$ in $S^3$ must have the same monodromy as one of these links. Since the monodromy determines the fiber surface up to isotopy \cite{BG16}, this completes the proof.

\subsection{Describing the fibrations}

The links listed in the statement of Theorem \ref{mainthm} are known to be fibered links of type $(0,3)$. The links $L_n$ of Figure \ref{gabai_surf} are discussed in \cite{Ga86}, and the example of Figure \ref{gen_zero_link} is exhibited in \cite{BIRS16} as the result of what the authors refer to as a generalized Hopf banding. However, we give a complete analysis, culminating in the needed descriptions of the monodromies of these links. \\

Our starting points are the basic facts that the right and left-handed Hopf links are both fibered links, with annular fiber surfaces, and that a boundary-connected sum of two fiber surfaces is again a fiber surface \cite{Ga86}. It follows immediately that all connected sums of two Hopf links are fibered links of type $(0,3)$, since their fiber surfaces are boundary-connected sums of two annuli.\\

The annular fiber surface of a Hopf link will be referred to as a \textit{Hopf annulus}. The Seifert surface of the link $L_n$ depicted in Figure \ref{gabai_surf} is obtained from a boundary-connected sum of two Hopf annuli of opposite types by performing a Stallings $-n$-twist along the curve shown on the left-hand side of Figure \ref{gabai_construct}, which is readily seen to be a Stallings curve of type 0. The reader can check that the result of pushing this curve off of the surface to either side is isotopic to that appearing on the right-hand side of the figure, surrounding the central band. Performing the $-n$-twist along the left-hand curve is therefore equivalent to performing $-1/n$ Dehn surgery along that on the right. The result of doing so is precisely the surface depicted in Figure \ref{gabai_surf}, so we conclude that it is a fiber surface.\\

\begin{figure}[t]
	\centering
	\includegraphics[scale=0.45]{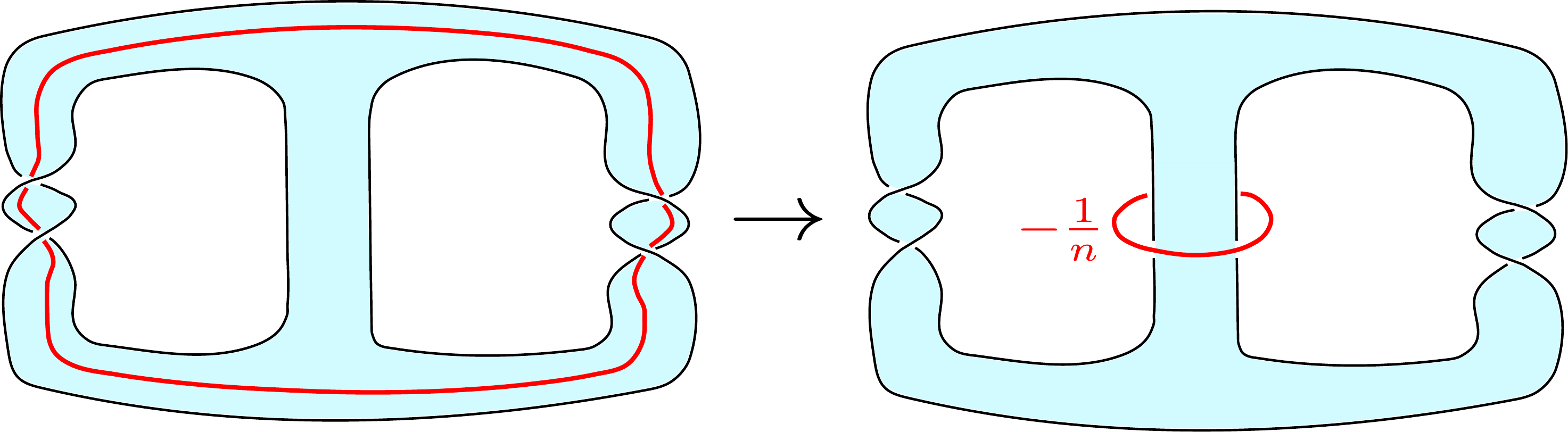}
	\caption{Constructing the fiber surface for $L_n$ of Figure \ref{gabai_surf} by performing a Stallings $n$-twist.}
	\label{gabai_construct}
\end{figure}

\begin{figure}[b]
	\centering
	\includegraphics[scale=0.45]{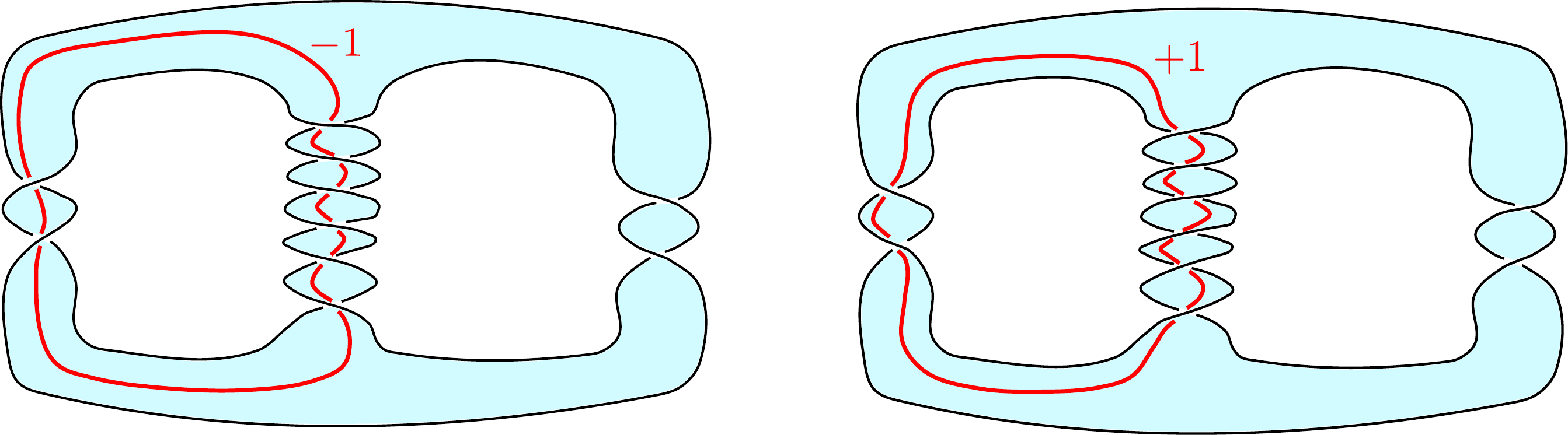}
	\caption{Constructing the fiber surfaces for the link of Figure \ref{gen_zero_link} (left) and its mirror image (right) via Stallings twists of types -1 and 1.}
	\label{special_construct}
\end{figure}

It remains to prove that the link $L$ of Figure \ref{gen_zero_link} and its mirror image $\overline{L}$ bound fiber surfaces of type $(0,3)$. Both can be constructed by performing Stallings twists on the fiber surfaces of $L_3$ and $L_{-3}$. To obtain the fiber surface for $L$, perform a positive Stallings twist on the fiber surface for $L_3$ along the red curve shown on the left of Figure \ref{special_construct}, which is a Stallings curve of type -1. The effect of the twist on the link can be seen by carefully blowing down this curve in the sense of Kirby calculus \cite{GS99}, viewed with framing equal to the surgery coefficient of $+1$. Doing so and simplifying yields the diagram of Figure \ref{gen_zero_link}. \\

The fiber surface for $\overline{L}$ is similarly obtained by performing a negative Stallings twist on the fiber surface for $L_{-3}$ along the curve shown on the right of Figure \ref{special_construct}. Since the total 4-component diagram is the mirror image of the previous one and the slope for the surgery curve is $-1$, it follows that the resulting link is indeed $\overline{L}$. This can also be seen from the fact, to be noted shortly, that the monodromies of this fiber surface and that of $L$ are inverse to each other.\\

\begin{figure}[t]
	\centering
	\includegraphics[scale=0.4]{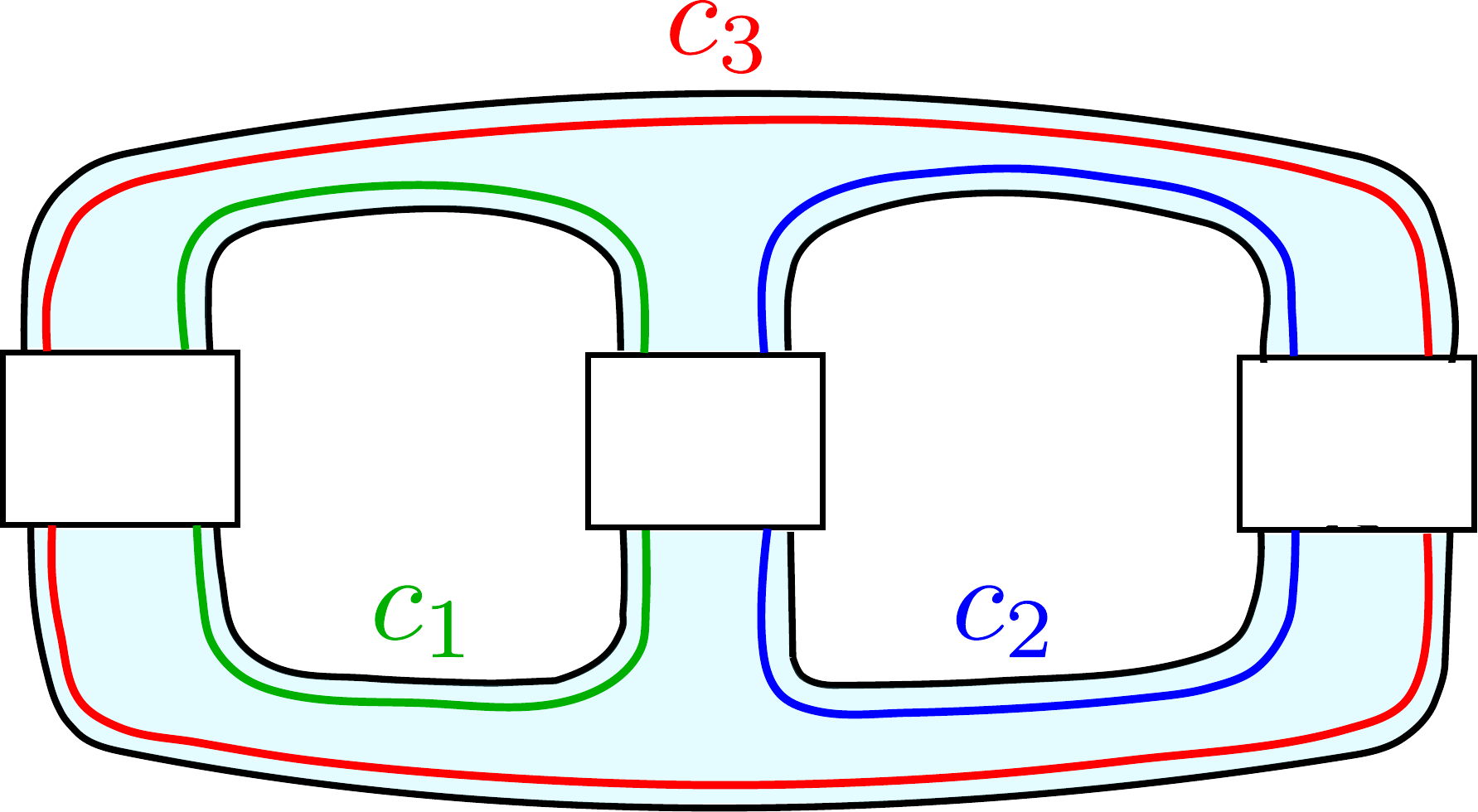}
	\caption{Labeling of the three essential curves in a surface of type $(0,3)$ used to write down monodromies of the fiber surfaces in the main theorem.}
	\label{mono_con}
\end{figure}

To set the stage for Part 2 of our proof, we determine the monodromies of all links examined above. Recall the notation $(M[n_1,n_2,n_3],L[n_1,n_2,n_3])$ for the manifold-link pair determined by the triple of integers $n_1,n_2,n_3$, as introduced in Section 2.1. This means that $L[n_1,n_2,n_3]$ is the fibered link of type $(0,3)$ in $M[n_1,n_2,n_3]$ whose fiber surface has monodromy of the form $T^{n_1}_1T^{n_2}_2T^{n_3}_3$, where $c_1$, $c_2$, and $c_3$ are disjoint curves in the fiber parallel to its distinct boundary components and $T_i$ is a left-handed Dehn twist along $c_i$. For simplicity, we will denote this monodromy by $\phi[n_1,n_2,n_3]$. For the present discussion, we choose the labeling of $c_1$, $c_2$, and $c_3$ relative to the diagrams of Figures \ref{gabai_construct} and \ref{special_construct} as shown in Figure \ref{mono_con}. However, as discussed in Section 2.1, $[n_1,n_2,n_3]$ should really be regarded as a multiset of three integers, rather than an ordered triple.\\

In the case of a boundary-connected sum of two Hopf annuli, our convention is such that $c_3$ is the only curve which intersects both summands. Since the monodromy of a right-handed (resp. left-handed) Hopf annulus is a left (resp. right) Dehn twist about its core curve, it follows that the monodromy for a boundary-connected sum of two Hopf annuli is $\phi[\pm 1,\pm 1,0]$, as its restriction to each summand must be the monodromy of that summand \cite{Ga86}. When the two Hopf annuli are of opposite types, as on the left of Figure \ref{gabai_construct}, the monodromy is $\phi[1,-1,0]$. It then follows from the above construction of the links $L_n$, via the final remark of Section 2.1, that the monodromy for the fiber surface of $L_n$ is given by $\phi[1,-1,-n]$. Likewise, by our construction of the remaining two fiber surfaces of type $(0,3)$ from those of $L_{-3}$ and $L_3$ via Stallings twists, it follows that their monodromies are given by $\phi[2,-1,3]$ and $\phi[-2,1,-3]$ (respectively).\\

Using the above observations, we can now show that any two of the above links are related by a sequence of Stallings twists. So far, we have seen that the fiber surfaces of the links $L_n$, $L$, and $\overline{L}$ can be constructed from that of $L_0$ by a sequence of Stallings twists. The claim therefore reduces to showing that a boundary-connected sum of two Hopf annuli of the same type is related to the fiber surface of one of these links by a sequence of Stallings twists. In fact, one twist will suffice. Letting $F$ denote one of these two surfaces, a curve $c$ in $F$ parallel to the boundary component which meets both of the Hopf annulus summands will be a Stallings curve of type $1$ if they are right-handed, and type $-1$ if they are left-handed. Performing the corresponding Stallings twist produces a fiber surface with monodromy $\phi[1,1,-1]=\phi[1,1,0]\circ T^{-1}_c$ in the first case, and one with monodromy $\phi[-1,-1,1]=\phi[-1,-1,0]\circ T_c$ in the second. It follows from the above discussion that the resulting fiber surfaces are those of $L_{-1}$ and $L_1$ (respectively). We have therefore proven the second statement of Theorem \ref{mainthm}.

\subsection{The list is complete}

It remains to show that we have described all fibered links of type $(0,3)$. By the work done in Section 3.1, it suffices to prove that $M[n_1,n_2,n_3]\cong S^3$ only if $[n_1,n_2,n_3]$ is one of $[\pm 1,\pm 1,0]$, $[2,-1,3]$, $[-2,1,-3]$, or $[1,-1,n]$. To begin, note that $M[-n_1,-n_2,-n_3]$ is related to $M[n_1,n_2,n_3]$ by an orientation-reversing homeomorphism which takes $L[-n_1,-n_2,-n_3]$ to $L[n_1,n_2,n_3]$, as the corresponding monodromies are inverse to each other. We may consequently assume that at least two of the integers, say $n_1$ and $n_3$, are nonnegative. \\

If one of the $n_i$ is zero, then the monodromy for $F[n_1,n_2,n_3]$ fixes an essential, separating arc, revealing a 2-sphere which decomposes $F[n_1,n_2,n_3]$ as the boundary-connected sum of two annular fiber surfaces. If $M[n_1,n_2,n_3]\cong S^3$, this means that $F[n_1,n_2,n_3]$ is a boundary-connected sum of two Hopf annuli, in which case $[n_1,n_2,n_3]=[\pm 1,\pm 1,0]$. Thus, we need only consider the case that $n_1$ and $n_3$ are strictly positive. If in addition $n_2=-1$, we already know that $M[n_1,n_2,n_3]\cong S^3$ if either $n_1=1$ or $n_3=1$, as $M[n_1,n_2,n_3]$ is then $M[1,-1,n]$ for some $n$. It therefore suffices to prove the following.

\begin{propn}\label{mainprop}
(a) If $n_1>0$, $n_3>0$, and $M[n_1,n_2,n_3]\cong S^3$, then $n_2=-1$.\\
	
(b) If $n_2=-1$ and $1<n_1\leq n_3$, then $M[n_1,n_2,n_3]\cong S^3$ if and only if $n_1=2$ and $n_3=3$.
\end{propn}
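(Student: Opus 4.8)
The plan is to analyze the genus two Heegaard diagram $(\alpha,\beta)$ associated with the monodromy $\phi[n_1,n_2,n_3]=T_1^{n_1}T_2^{n_2}T_3^{n_3}$, using Lemma \ref{detlem} and Theorem \ref{HOTthm}. First I would fix, once and for all, a pair of disjoint arcs $\delta_1,\delta_2$ properly embedded in the pair of pants $F$ which cut it into a disk --- say $\delta_j$ running from $b_3$ to $b_j$ for $j=1,2$ --- and compute their images under $\phi[n_1,n_2,n_3]$. Since $c_1,c_2,c_3$ are mutually disjoint and $\delta_j$ meets only $c_j$ and $c_3$, the arc $\phi(\delta_j)$ spirals $n_j$ times around $b_j$ and $n_3$ times around $b_3$, and $\phi(\delta_1),\phi(\delta_2)$ remain disjoint; taking parallel copies $\delta_j'$ on the side dictated by the minimal--position recipe then produces an explicit picture of $(\alpha,\beta)$ on $\Sigma=\partial\eta(F)$ in which the geometric intersection numbers $|\alpha_i\cap\beta_j|$ --- hence the Whitehead graphs $\Sigma_\alpha(\beta)$ and $\Sigma_\beta(\alpha)$ --- are governed uniformly by $n_1$, $|n_2|$, and $n_3$, each contributing a parallel family of that many edges together with a bounded number of ``connecting'' edges independent of the $n_i$.

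With this picture in hand, I would read off the intersection matrix $M(\alpha,\beta)$ and compute $\det M(\alpha,\beta)=\pm(n_1n_2+n_2n_3+n_3n_1)$, the order of $H_1(M[n_1,n_2,n_3])$ up to sign. By Lemma \ref{detlem}, $M[n_1,n_2,n_3]\cong S^3$ forces $|n_1n_2+n_2n_3+n_3n_1|=1$. This immediately settles part (b): if $n_2=-1$ and $1<n_1\le n_3$, then $|n_1n_2+n_2n_3+n_3n_1|=|(n_1-1)(n_3-1)-1|$, which is $0$ when $(n_1,n_3)=(2,2)$ and is at least $2$ once $(n_1-1)(n_3-1)\ge 3$; so the diagram can be one for $S^3$ only if $(n_1-1)(n_3-1)=2$, i.e.\ $(n_1,n_3)=(2,3)$. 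The reverse implication for $[2,-1,3]$ was established in Section 3.1, completing (b). For part (a), the same determinant condition, together with $n_1,n_3>0$ and the fact that the case $n_2=0$ has already been disposed of, rules out $n_2\ge 1$ at once (there the determinant is at least $n_1+n_3+n_1n_3\ge 3$) and also rules out $n_1=1$ or $n_3=1$ with $n_2\le -2$; what remains is to rule out $n_1,n_3\ge 2$ with $n_2\le -2$.

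For that last step I would invoke Theorem \ref{HOTthm}: if such an $M[n_1,n_2,n_3]$ were $S^3$, then $(\alpha,\beta)$ would contain an $\alpha$-wave or a $\beta$-wave, so it suffices to show that when $n_1,n_3\ge 2$ and $n_2\le -2$ neither $\Sigma_\alpha(\beta)$ nor $\Sigma_\beta(\alpha)$ contains a wave --- that is, that neither of the two four-holed spheres $\Sigma_\alpha$, $\Sigma_\beta$ admits an essential properly embedded arc with both endpoints on one boundary circle and disjoint from the arcs cut out by the other cut system. Because adding further parallels to an already-nonempty family can neither create nor destroy such an arc (an arc slipping between two parallel copies of an edge would be inessential), the combinatorial type of each Whitehead graph stabilizes once $n_1,|n_2|,n_3$ are at least $2$, so this is a finite inspection together with a handful of small-parameter boundary cases checked by hand.

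The main obstacle is exactly this inspection. The delicate point is drawing the Whitehead graphs of $(\alpha,\beta)$ correctly as functions of $(n_1,n_2,n_3)$ --- which hinges on placing the twisted arcs $\phi(\delta_j)$ and their parallel copies in genuine minimal position (the side-choice subtlety flagged in the construction) --- and then verifying the no-wave property uniformly, in particular across the homology-sphere cases such as $[3,-2,5]$, $[3,-2,7]$, $[5,-3,7],\dots$ that the determinant test cannot eliminate. I expect the wave check to be short once the pictures are set up correctly; organizing the argument so that it is visibly uniform in $n_1$, $|n_2|$, and $n_3$, and not merely a case list, is where the real effort lies.
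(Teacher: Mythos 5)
Your algebraic half is carried out correctly and follows the paper's route: the intersection matrix of the Heegaard diagram associated with $\phi[n_1,n_2,n_3]$ has determinant $n_1n_2+n_1n_3+n_2n_3$, Lemma \ref{detlem} gives $|n_1n_2+n_1n_3+n_2n_3|=1$, and rewriting this as $|(n_1-1)(n_3-1)-1|$ when $n_2=-1$ settles part (b) (with the reverse implication coming from the Stallings-twist construction of $L[2,-1,3]$ in Section 3.1), while positivity of $n_2$ is excluded outright. In fact you squeeze slightly more out of the determinant than the paper does, by also eliminating $n_1=1$ or $n_3=1$ with $n_2\le-2$, so that the wave criterion is only needed for $n_1,n_3\ge 2$, $n_2\le -2$; the paper instead runs the wave argument for all $n_1,n_3>0$, $n_2<-1$.

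The genuine gap is that the decisive step of part (a) is announced but not performed. You reduce to showing that for $n_1,n_3\ge 2$, $n_2\le -2$ neither $\Sigma_\alpha(\beta)$ nor $\Sigma_\beta(\alpha)$ contains a wave, and then defer this as ``a finite inspection'' whose correct execution you yourself flag as the main obstacle; your stabilization remark (that adding parallel copies of an existing edge cannot create or destroy a wave, so only the reduced Whitehead graph matters) is sound, but it only tells you the check is finite --- it does not produce the graph or verify the absence of a wave, and this is exactly the substantive content of the paper's proof of (a). There, the Whitehead graph $\Sigma_\alpha(\beta)$ is drawn explicitly from the twisted arcs $\phi(\gamma_1),\phi(\gamma_2)$ (Figures \ref{whitehead_construct}--\ref{whitehead2}), with parallel edge families weighted by the $n_i$, and one reads off directly that no boundary circle of $\Sigma_\alpha$ admits an essential arc missing $\beta$ once $n_2<-1$; moreover, the second graph $\Sigma_\beta(\alpha)$ is not inspected separately but identified with $\Sigma_\alpha(\beta)$ by applying $\phi^{-1}$ and flipping $F\times[-1,1]$, a symmetry your plan does not anticipate (you would instead need a second, independent inspection). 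Until the no-wave verification is actually carried out --- uniformly in $n_1,|n_2|,n_3$, so that it disposes of the homology-sphere examples such as $[3,-2,5]$ that the determinant cannot see --- the proof of (a) is incomplete.
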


Part (b) of this proposition will be established in Lemma \ref{techlem} by applying Lemma \ref{detlem}. There, we also take the first step towards proving (a) by showing that $n_2<0$ if both $n_1$ and $n_3$ are positive. We then complete the proof (a) by applying Theorem \ref{HOTthm}. \\

\begin{figure}[t]
	\centering
	\includegraphics[scale=0.45]{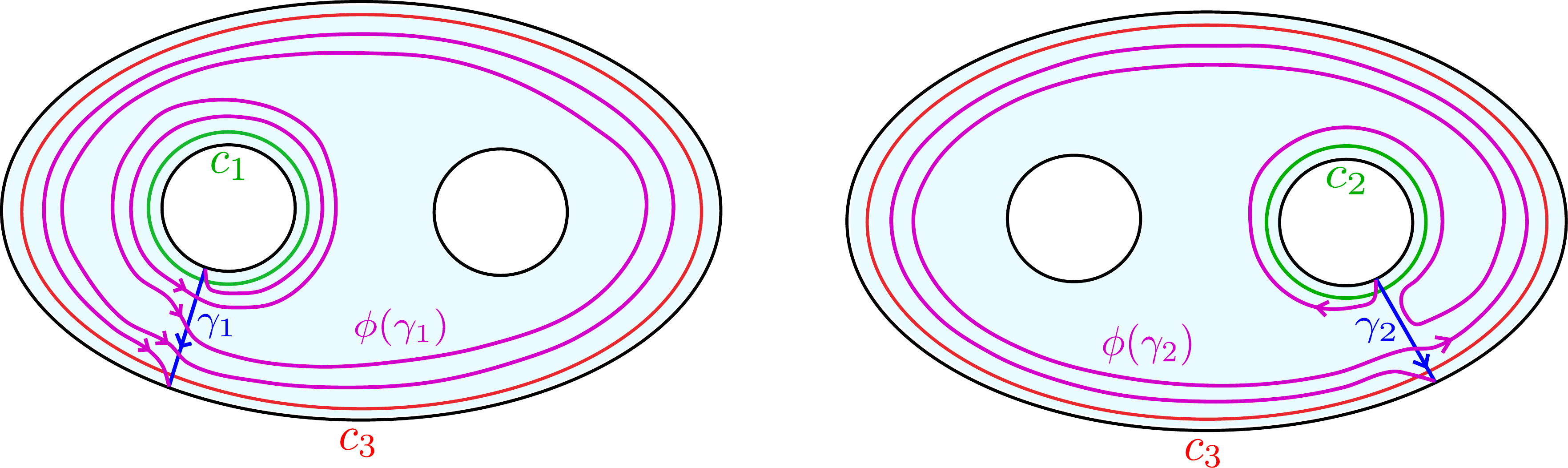}
	\caption{Images of the arcs $\gamma_1$ and $\gamma_2$ under $\phi=T^2_1T^{-1}_2T^2_3$.}
	\label{mono_gamma1}
\end{figure}

For what follows, $F$ will denote the (abstract) surface of type $(0,3)$, with push-offs of its boundary components labeled $c_1,c_2,c_3$ in some fashion. In the above notation, this labeling yields a description of the monodromy of $L[n_1,n_2,n_3]$ as $\phi[n_1,n_2,n_3]=T^{n_1}_1T^{n_2}_2T^{n_3}_3$. Let $\gamma_1$ and $\gamma_2$ be mutually disjoint arcs properly embedded in $F$ such that $\gamma_i$ runs from the boundary component parallel to $c_i$ to that parallel to $c_3$. We consider the genus two Heegaard diagram $(\alpha,\beta)$ determined by $\phi[n_1,n_2,n_3]$, as defined in Section 2.2. Recall that we are assuming both $n_1$ and $n_3$ to be positive.\\

We first describe the action of $\phi=\phi[n_1,n_2,n_3]$ on $\gamma_1$ and $\gamma_2$. As described in \cite{FM12} in a more general setting, the process of computing $\phi(\gamma_i)$ may be viewed as taking $|n_j|$ parallel copies of $c_j$ for each $j=1,2,3$ and resolving their points of intersection with $\gamma_i$ in the appropriate manner, as determined by the sign of each $n_j$. In both cases, after $\phi(\gamma_i)$ is computed in this fashion and perturbed very slightly, it will intersect both $\gamma_1$ and $\gamma_2$ in the fewest possible number of points. The computations for the case $n_1=2=n_3$, $n_2=-1$ are shown in Figure \ref{mono_gamma1}. Note that, under the conventions indicated in that figure, $\phi(\gamma_i)$ will intersect $\gamma_i$ in $|n_i|+n_3$ points. \\

Let $(\Sigma,\alpha,\beta)$ denote the Heegaard diagram associated with $\phi$ determined by the pair $\gamma_1,\gamma_2$, as defined in Section 2.2. We begin by using the intersection matrix $M(\alpha,\beta)$ to obtain some basic restrictions on the triple $n_1,n_2,n_3$ needed for $M[n_1,n_2,n_3]$ to be $S^3$. In particular, we prove part (b) of Proposition \ref{mainprop}.

\begin{figure}[t]
	\centering
	\includegraphics[scale=0.5]{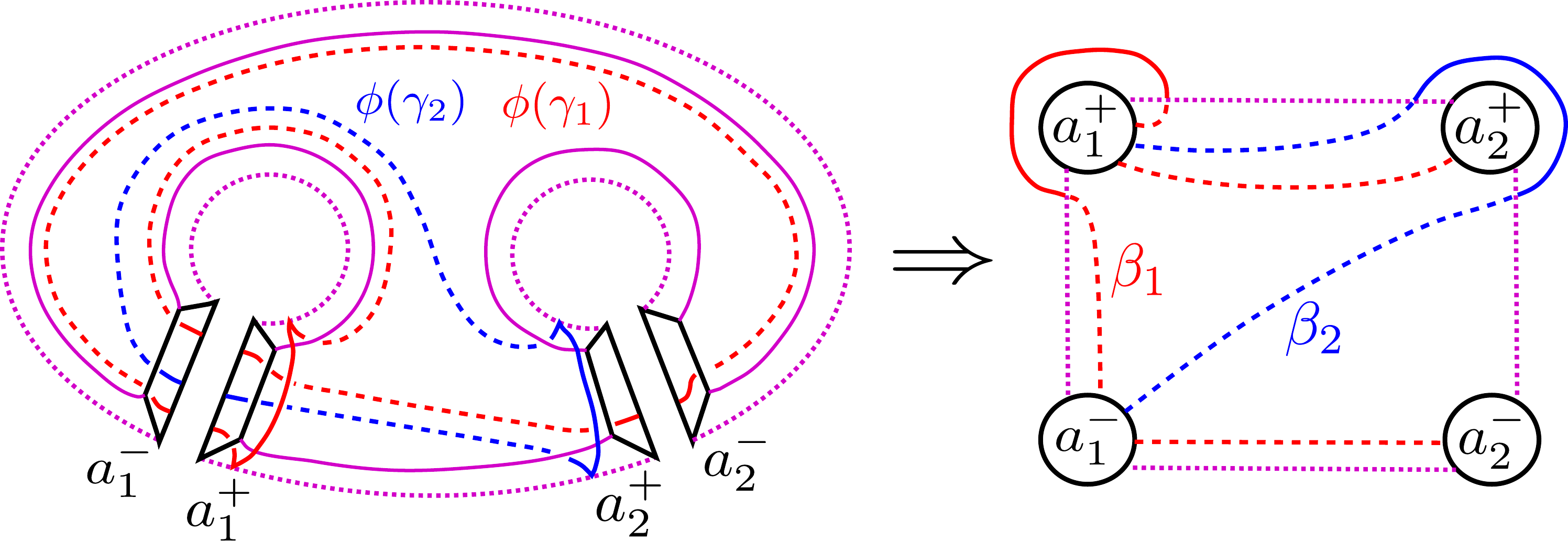}
	\caption{Using $\phi(\gamma_1)$ and $\phi(\gamma_2)$ to draw the Whitehead graph for $(\alpha,\beta)$. On the left, the dashed arcs should be thought of as sitting in $F\times\{-1\}\subset \Sigma=\partial\left(F\times[-1,1]\right)$, with the solid arcs lying above them. The dotted purple arcs lie in the boundary of $F\times\{-1\}$, and are not part of the Whitehead graph.}
	\label{whitehead_construct}
\end{figure}

\begin{lem}\label{techlem}
	Let $M[n_1,n_2,n_3]$ be the 3-manifold defined above. Suppose that $n_1$ and $n_3$ are strictly positive and $M[n_1,n_2,n_3]\cong S^3$. If $n_2\neq 0$, then $n_2$ must be negative. Further, if $n_2=-1$ and $1<n_1\leq n_3$, then $n_1=2$ and $n_3=3$.
\end{lem}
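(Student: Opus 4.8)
The plan is to pin down the intersection matrix $M(\alpha,\beta)$ of the genus two Heegaard diagram associated with $\phi=\phi[n_1,n_2,n_3]$ and the arcs $\gamma_1,\gamma_2$, and then feed it into Lemma~\ref{detlem}. First I would identify the images $\phi(\gamma_1)$ and $\phi(\gamma_2)$. Since $\gamma_1$ joins the boundary component parallel to $c_1$ to the one parallel to $c_3$, it can be isotoped off $c_2$, and $T_3^{n_3}$ modifies it only near $b_3$; hence $\phi(\gamma_1)=T_1^{n_1}T_3^{n_3}(\gamma_1)$, and symmetrically $\phi(\gamma_2)=T_2^{n_2}T_3^{n_3}(\gamma_2)$. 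Concretely, $\phi(\gamma_1)$ agrees with $\gamma_1$ away from the collars of the first and third boundary components and spirals $n_1$ (resp.\ $n_3$) times inside the first (resp.\ third) collar, and $\phi(\gamma_2)$ behaves the same way using the second and third collars. This is the picture drawn in Figure~\ref{mono_gamma1} for the case $n_1=n_3=2$, $n_2=-1$.

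Next I would read off $M(\alpha,\beta)$. By construction of the associated diagram, every intersection of $\alpha_i$ with $\beta_j$ occurs over $F\times\{-1\}$, where $\alpha_i$ appears as a parallel pushoff $\gamma_i'$ of $\gamma_i$ and $\beta_j$ appears as $\phi(\gamma_j)$. Because $\gamma_1'$ misses the second collar and $\gamma_2'$ misses the first, all crossings lie in one of the three collars, where the spiralling of $\phi(\gamma_j)$ yields crossings of a single sign. Tracking those signs with the aid of Figures~\ref{mono_gamma1} and \ref{whitehead_construct}, one finds, after orienting the four curves, that $\alpha_1\cdot\beta_1=\epsilon(n_1+n_3)$, $\alpha_2\cdot\beta_2=\epsilon(n_2+n_3)$, and $\alpha_1\cdot\beta_2=\alpha_2\cdot\beta_1=\epsilon\,n_3$ for a fixed sign $\epsilon=\pm1$, so that
\[
\det M(\alpha,\beta)=(n_1+n_3)(n_2+n_3)-n_3^2=n_1n_2+n_2n_3+n_3n_1 .
\]
As a sanity check, $|n_1n_2+n_2n_3+n_3n_1|=1$ for each triple shown in Section~3.1 to give $S^3$.

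The remainder is arithmetic. If $M[n_1,n_2,n_3]\cong S^3$, then Lemma~\ref{detlem} forces $|n_1n_2+n_2n_3+n_3n_1|=1$. Assume $n_1,n_3>0$. If $n_2>0$, then $n_1n_2+n_2n_3+n_3n_1\geq 3$, a contradiction, so $n_2$ is negative whenever $n_2\neq 0$. If moreover $n_2=-1$, the constraint becomes $|n_1n_3-n_1-n_3|=|(n_1-1)(n_3-1)-1|=1$, hence $(n_1-1)(n_3-1)\in\{0,2\}$; when $1<n_1\leq n_3$ both factors are at least $1$, so $(n_1-1)(n_3-1)=2$, which gives $n_1=2$ and $n_3=3$.

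The hard part is the second paragraph: making the sign bookkeeping rigorous. One has to check that within the $(1,1)$ entry the crossings from the first collar and those from the third collar have the same sign, and that the two off-diagonal entries have the same sign; otherwise the determinant could come out as $(n_1+n_3)(n_2+n_3)+n_3^2$ or $n_1n_2-n_2n_3-n_3n_1$, neither of which is consistent with the known $S^3$ examples. This is a careful orientation analysis of the spiralling arcs in minimal position — it can be organized around the fact that the symmetry of a pair of pants exchanging two boundary components and fixing the third is orientation-preserving, which forces the per-collar signs to agree — and it is the step where the figures, rather than formula manipulation, carry the argument. Everything else, including that this lemma needs only Lemma~\ref{detlem} and not Theorem~\ref{HOTthm}, is routine.
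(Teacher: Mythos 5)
Your proposal is correct and follows essentially the same route as the paper: compute the intersection matrix of the associated genus two Heegaard diagram, obtaining $M(\alpha,\beta)=\begin{pmatrix} n_1+n_3 & n_3\\ n_3 & n_2+n_3\end{pmatrix}$ (up to an overall sign), invoke Lemma \ref{detlem} to get $|n_1n_2+n_1n_3+n_2n_3|=1$, and finish with the same arithmetic. The sign bookkeeping you flag as the remaining "hard part" is precisely what the paper's proof carries out, by orienting $\gamma_i$ and $\phi(\gamma_i)$ toward the $c_3$ boundary component and checking that when $n_i<0$ the two endpoint intersections cancel while the interior points contribute $-|n_i|+n_3$, so your stated entries are indeed the correct ones.
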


\begin{proof}[\textbf{Proof}]
	Let $(\Sigma,\alpha,\beta)$ be as above. To write down $M(\alpha,\beta)$, for both $i=1,2$, orient both $\gamma_i$ and $\phi(\gamma_i)$ so that their terminal endpoints lie on the boundary curve parallel to $c_3$. Orient the corresponding curves $\alpha_i$ and $\beta_i$ in the Heegaard diagram accordingly.\\
	
	As visible in Figure \ref{heegfig2} of Section 2.2, for both $i=1,2$, the points in which $\alpha_i$ and $\beta_i$ intersect all lie in the lower copy $F\times\{-1\}\subset\Sigma$ of the fiber surface. In fact, they correspond exactly to the $|n_i|+n_3$ points of intersection of $\gamma_i$ with $\phi(\gamma_i)$ if $n_i$ is positive, endpoints included, while they correspond to the interior points of intersection of the two arcs if $n_i$ is negative. In the former case, as illustrated on the left side of Figure \ref{mono_gamma1}, the sign of each intersection point of $\gamma_i$ with $\phi(\gamma_i)$ is + when the unit tangent vector to $\gamma_i$ is taken as the first vector in the corresponding basis of the tangent space. Since the signs of these points are the same as the signs of the corresponding intersection points of $\alpha_i$ with $\beta_i$, this means that $\alpha_i\cdot\beta_i=|\gamma_i\cap\phi(\gamma_i)|=n_i+n_3$ when $n_i$ is positive.\\
	
	It also turns out that $\alpha_i\cdot\beta_i=n_i+n_3$ when $n_i$ is negative. Here, while $\alpha_i$ intersects $\beta_i$ in two fewer points, the pair of points of $\gamma_i\cap\phi(\gamma_i)$ that have been removed are the endpoints. Since $n_i$ is negative while $n_3$ is positive, these have opposite signs. The signs of the points of $Int(\gamma_i)\cap Int(\phi(\gamma_i))$, measured as before, match the signs of the corresponding points of $\alpha_i\cap\beta_i$. This means that $\alpha_i\cdot\beta_i$ is still equal to the sum of the signs of the points of $\gamma_i\cap\phi(\gamma_i)$. The first $|n_i|$ of these points encountered as $\gamma_i$ is traversed, including the initial endpoint, all have negative sign, while the final $n_3$ intersection points all have positive sign. Thus, we have $\alpha_i\cdot\beta_i=-|n_i|+n_3=n_i+n_3$. \\
	
	It follows from our orientation conventions that the remaining two entries $\alpha_1\cdot\beta_2$ and $\alpha_2\cdot\beta_1$ of $M(\alpha,\beta)$ are both equal to $n_3$, since $n_3>0$. By the observations of the previous two paragraphs, we therefore have 
	\begin{equation}
	M(\alpha,\beta)=\begin{pmatrix} n_1+n_3 & n_3 \\ n_3 & n_2+n_3 \end{pmatrix}
	\nonumber
	\end{equation}
	regardless of the sign of $n_2$. By Lemma \ref{detlem}, the fact that $M[n_1,n_2,n_3]\cong S^3$ implies 
	\begin{equation}
	1=|\det M(\alpha,\beta)|=|n_1n_2+n_1n_3+n_2n_3|.
	\nonumber
	\end{equation}
	Since both $n_1$ and $n_3$ are positive, it is evident that the right-hand side of this equation is greater than 1 if $n_2$ is also positive, so $n_2$ must be negative, as claimed. When $n_2=-1$, this equation can be written as $1=|(n_1-1)(n_3-1)-1|$, which means that $(n_1-1)(n_3-1)$ is equal to either 0 or 2. If both $n_1$ and $n_3$ are greater than 1, in which case this expression is non-zero, it follows immediately that $n_1=2$ and $n_3=3$, since we have assumed $n_1\leq n_3$.
\end{proof}

\begin{proof}[\textbf{Proof of (a) of Proposition \ref{mainprop}}]
Suppose that $M[n_1,n_2,n_3]\cong S^3$, $n_1$ and $n_3$ are strictly positive, and $n_2\neq 0$. By Lemma \ref{techlem}, we know that $n_2<0$. We wish to prove that $n_2=-1$. To do this, we consider the Whitehead graph for the Heegaard diagram of $M[n_1,n_2,n_3]$ discussed above. Our method of drawing $\Sigma_{\alpha}(\beta)$ is illustrated in Figure \ref{whitehead_construct} for the simple case $n_1=1$, $n_2=-1$, $n_3=1$. As in Section 2.2, we view the Heegaard surface $\Sigma$ as the boundary of $F\times[-1,1]$, where $F$ is a surface of type $(0,3)$ identified with the fiber surface for $L[n_1,n_2,n_3]$. Letting $\phi$ denote the corresponding monodromy, we compute $\phi(\gamma_1)$ and $\phi(\gamma_2)$ as before, where all arcs are thought of as lying in the `bottom' $F\times\{-1\}$ of $\Sigma$. We then construct $\beta_i$ and $\alpha_i$ from $\phi(\gamma_i)$ and a push-off of $\gamma_i$ (respectively) as described in Section 2.2, and cut $\Sigma$ along $\alpha_1$ and $\alpha_2$ to obtain $\Sigma_{\alpha}$. In the figure, the boundary components of $\Sigma_{\alpha}$ arising from the cut along $\alpha_i$ are denoted by $a^-_i$ and $a^+_i$, labeled so that the arc in $\beta_i$ which lies outside of $F\times\{-1\}$ runs parallel to an arc in $a^+_i$.\\

\begin{figure}
	\centering
	\includegraphics[scale=0.6]{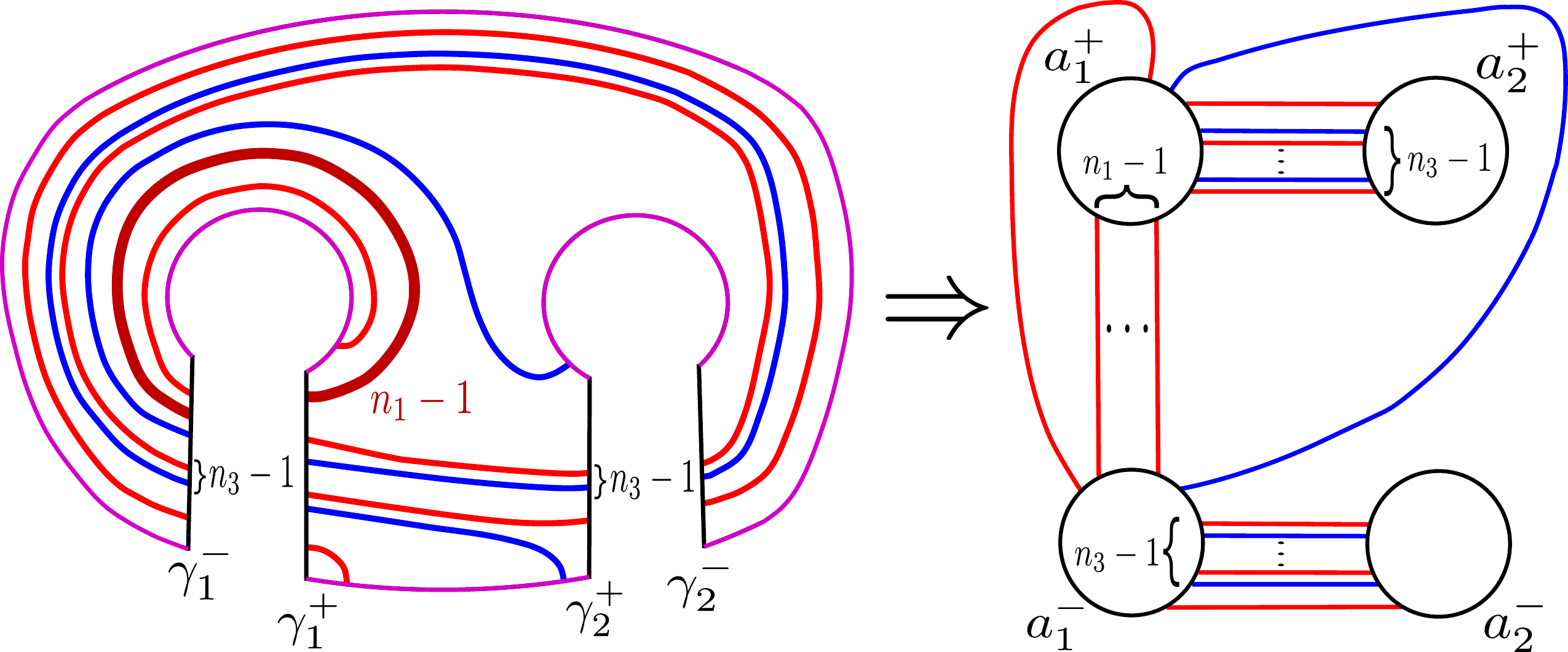}
	\caption{Constructing the Whitehead graph $\Sigma_{\alpha}(\beta)$ in the case $n_1,n_3>0$, $n_2=-1$. On the right, the weights given to the bold arc and two pairs of parallel arcs indicate the number of times that they appear.}
	\label{whitehead1}
\end{figure}

This method of constructing the Whitehead graph $\Sigma_{\alpha}(\beta)$ is illustrated in Figures \ref{whitehead1} and \ref{whitehead2} for the pertinent values of $n_1$, $n_2$, and $n_3$. In each figure, the diagram on the left corresponds to the portion of the left side of Figure \ref{whitehead_construct} which lies in the lower copy $F\times\{-1\}\subset\Sigma$ of $F$. While one can construct a picture of the other Whitehead graph $\Sigma_{\beta}(\alpha)$ directly, there is a way of viewing this process which 

\begin{figure}[h]
	\centering
	\includegraphics[scale=0.6]{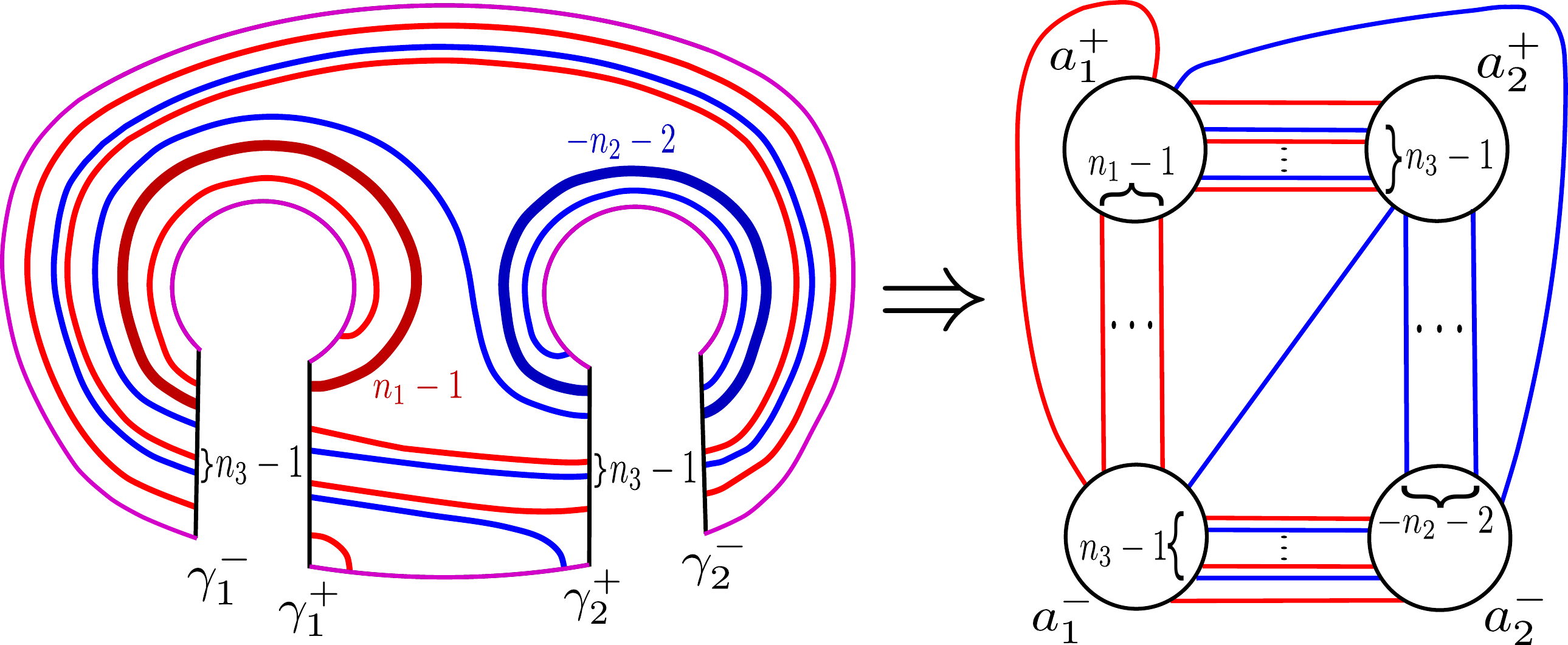}
	\caption{Constructing the Whitehead graph $\Sigma_{\alpha}(\beta)$ in the case $n_1,n_3>0$, $n_2<-1$, where it does not contain a wave. On the right, the weights given to the two bold arcs and pairs of parallel arcs indicate the number of times that they appear.}
	\label{whitehead2}
\end{figure}

\noindent
reveals a direct relationship between $\Sigma_{\beta}(\alpha)$ and $\Sigma_{\alpha}(\beta)$. Begin by applying $\phi^{-1}=T^{-n_1}_1T^{-n_2}_2T^{-n_3}_3$ simultaneously to all four arcs involved in the construction of $\Sigma_{\alpha}(\beta)$. We can reconstruct $\alpha_i$ and $\beta_i$ from this new picture as before, as illustrated in Figure \ref{heegfig2} of Section 2.2, but with $\phi^{-1}(\gamma_i)$ playing the role of $\phi(\gamma_i)$ and the roles of $\alpha_i$ and $\beta_i$ interchanged. We then cut along $\beta_1$ and $\beta_2$ to obtain $\Sigma_{\beta}(\alpha)$.\\

Under appropriate labellings of the copies of $\beta_i$ in $\Sigma_{\beta}$ as $b^+_i$ and $b^-_i$ for $i=1,2$, this construction reveals $\Sigma_{\beta}(\alpha)$ to be exactly the same as $\Sigma_{\alpha}(\beta)$ upon replacing $b$'s with $a$'s. To see this, note that the above procedure is tantamount to taking our original picture of $F$, with the arcs $\gamma_i$ and $\phi(\gamma_i)$, flipping it over vertically, and then proceeding just as in the construction of $\Sigma_{\alpha}(\beta)$. The correspondence between the pictures in $F-(N(\gamma_1)\cup N(\gamma_2))$ leading to the Whitehead graphs is depicted in Figure \ref{alpha_beta} for the case $n_1=1=n_3$, $n_2=-1$. The upper diagram is that used to construct $\Sigma_{\beta}(\alpha)$, while the lower two diagrams are used to construct $\Sigma_{\alpha}(\beta)$.\\

\begin{figure}[!]
	\centering
	\includegraphics[scale=0.35]{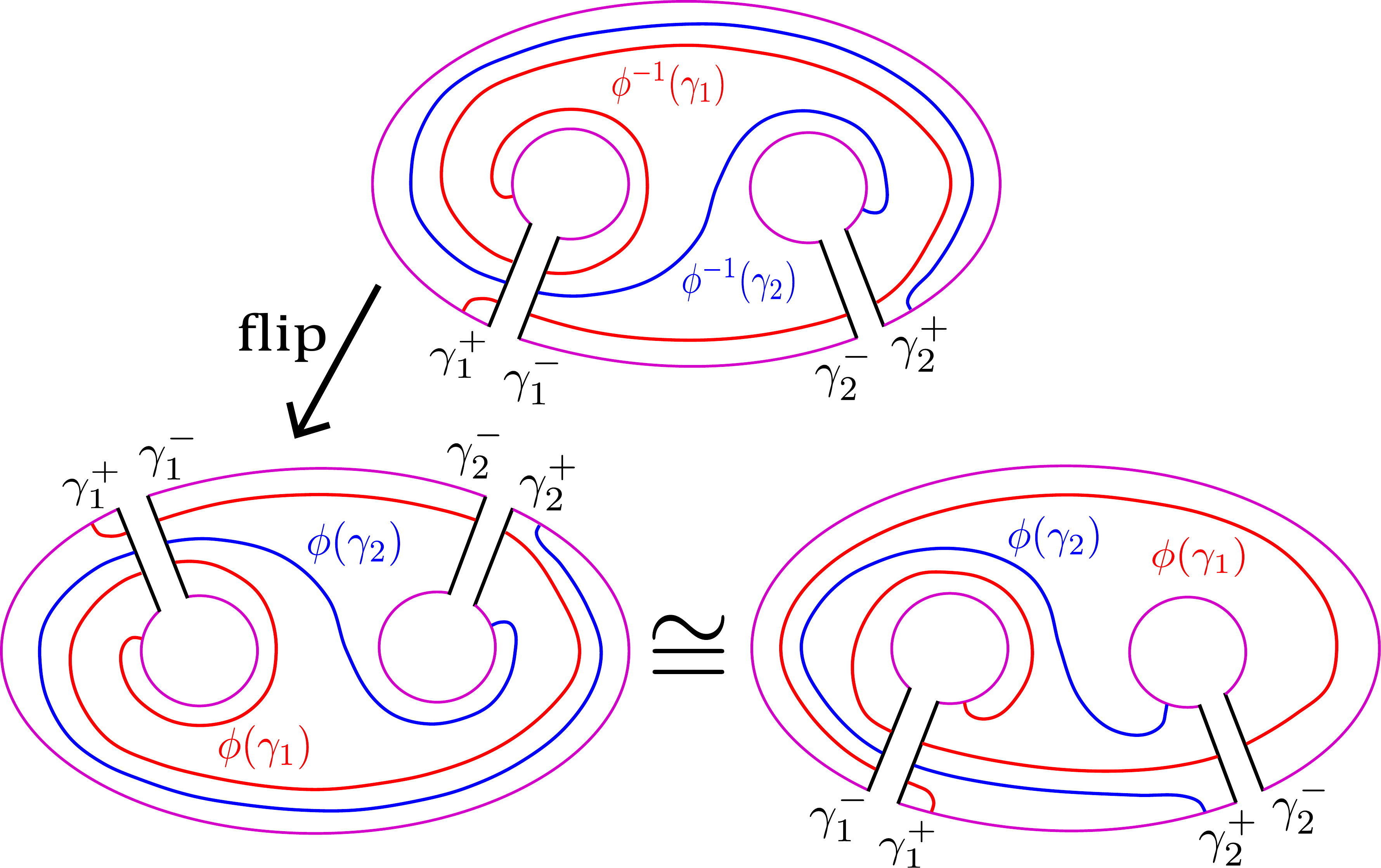}
	\caption{Going from the diagram in $F$ used to construct $\Sigma_{\beta}(\alpha)$ (top) and that used to construct $\Sigma_{\alpha}(\beta)$ (bottom) in the case $n_1=1=n_3$, $n_2=-1$, the same one treated in Figure \ref{whitehead_construct}.}
	\label{alpha_beta}
\end{figure}

In particular, we see that $\Sigma_{\alpha}(\beta)$ contains a wave if and only if $\Sigma_{\beta}(\alpha)$ does. By Theorem \ref{HOTthm}, $\Sigma_{\alpha}(\beta)$ must contain a wave if $M[n_1,n_2,n_3]\cong S^3$. However, as visible in Figure \ref{whitehead2}, $\Sigma_{\alpha}(\beta)$ does not contain a wave when $n_1>0$, $n_3>0$, and $n_2<-1$, so it follows that $\Sigma_{\beta}(\alpha)$ also does not contain a wave. Combined with Lemma \ref{techlem}, this shows that if $n_1>1$, $n_3>1$, and $n_1\leq n_3$, then $M[n_1,n_2,n_3]\cong S^3$ only if $n_2=-1$. This proves Proposition \ref{mainprop} (a), and thereby proves the main theorem.
\end{proof}

\bibliography{DissertationBibliography}

\begin{thebibliography}{10}

\bibitem{BG16}
{\sc S.~Baader and C.~Graf}, {\em Fibred links in {$S^3$}}, Expositiones
  Mathematicae, 34 (2016), pp.~423--435.

\bibitem{BIRS16}
{\sc D.~Buck, K.~Ishihara, M.~Rathbun, and K.~Shimokawa}, {\em Band surgeries
  and crossing changes between fibered links}, J. Lond. Math. Soc., 94 (2016),
  pp.~557--582.

\bibitem{Et06}
{\sc J.~Etnyre}, {\em Lectures on open book decompositions and contact
  structures}, in Floer homology, gauge theory, and low-dimensional topology,
  vol.~5 of Clay Math. Proc., Amer. Math. Soc., Providence, RI, 2006,
  pp.~103--141.

\bibitem{EO08}
{\sc J.~Etnyre and B.~Ozbagci}, {\em Invariants of contact structures from open
  books}, Transactions of the American Mathematical Society, 360 (2008),
  pp.~3133--3151.

\bibitem{FM12}
{\sc B.~Farb and D.~Margalit}, {\em A Primer on Mapping Class Groups},
  Princeton University Press, Princeton, NJ, 2012.

\bibitem{Ga86}
{\sc D.~Gabai}, {\em Detecting fibered links in {$S^3$}}, Comm. Math. Helv., 61
  (1986), pp.~519--555.

\bibitem{GS99}
{\sc R.~E. Gompf and A.~I. Stipsicz}, {\em 4-Manifolds and {Kirby} calculus},
  vol.~20 of Graduate Studies in Mathematics, Amer. Math. Soc., Providence, RI,
  1999.

\bibitem{Go70}
{\sc F.~Gonz{\'a}lez-Acuna}, {\em Dehn's construction on knots}, Bol. Soc. Mat.
  Mexicana, 15 (1970), pp.~58--79.

\bibitem{Ha82}
{\sc J.~Harer}, {\em How to construct all fibered knots and links}, Topology,
  21 (1982), pp.~263--280.

\bibitem{HOT80}
{\sc T.~Homma, M.~Ochiai, and M.-O. Takahishi}, {\em An algorithm for
  recognizing {$S^3$} in 3-manifolds with {Heegaard} splittings of genus two},
  Osaka J. Math, 17 (1980), pp.~625--648.

\bibitem{MZ14}
{\sc J.~Meier and A.~Zupan}, {\em Genus two trisections are standard},
  arXiv:1410.8133,  (2014).

\bibitem{Och79}
{\sc M.~Ochiai}, {\em Heegaard-diagrams and {Whitehead-graphs}}, Math. Sem.
  Notes of Kobe Univ., 7 (1979), pp.~573--590.

\bibitem{Sc14}
{\sc J.~Schultens}, {\em Introduction to 3-manifolds}, vol.~151 of Graduate
  Studies in Mathematics, Amer. Math. Soc., 2014.

\bibitem{St78}
{\sc J.~Stallings}, {\em Constructions of fibred knots and links}, in Algebraic
  and Geometric Topology, vol.~32 of Proc. Sympos. Pure Math, Amer. Math. Soc.,
  1978, pp.~55--59.

\end{thebibliography}

\bibliographystyle{siam}

\end{document}